\documentclass[onefignum,onetabnum]{siamart171218}
\usepackage{graphicx}
\usepackage{amsmath}
\usepackage{amscd}
\usepackage{lmodern}
\usepackage{enumitem}
\usepackage[latin2]{inputenc}
\usepackage{t1enc}
\usepackage[mathscr]{eucal}
\usepackage{indentfirst}
\usepackage{graphicx}
\usepackage{graphics}
\usepackage{pict2e}
\usepackage{amssymb}
\usepackage{wrapfig}
\usepackage{epic}
\numberwithin{equation}{section}
\usepackage[margin=2.9cm]{geometry}
\usepackage{epstopdf} 
\usepackage{bibcheck}
\usepackage{caption}
\usepackage{mdframed}
\usepackage[english]{babel}
\usepackage{graphicx}
\usepackage{listings}
\usepackage{mathtools}
\usepackage[font=small,labelfont=bf]{caption}
\usepackage{xcolor}
\usepackage{bm}
\usepackage{amsfonts}
\usepackage{amsmath}
\usepackage{multirow}
\usepackage{algpseudocode}
\usepackage[colorinlistoftodos]{todonotes}
\newcommand\myeq{\stackrel{\mathclap{\normalfont\tiny\mbox{def}}}{=}}

\newcommand{\R}{\mathbb{R}}
\newcommand{\sm}{\setminus}

\newcommand{\supp}{\mathrm{supp}}

\newcommand{\argmin}{\textnormal{argmin}}
\newcommand{\argmax}{\textnormal{argmax}}

\newcommand{\tx}{\textnormal}

\newcommand{\Sc}[2]{(#1, #2)}

\newcommand{\dist}{\textnormal{dist}}

\newcommand{\s}[1]{\{1, ..., #1\}}
\newcommand{\n}[1]{\| #1 \|}


\usepackage{rotating}
\newtheorem{Th}{Theorem}[section]
\newtheorem{Lemma}{Lemma}[section]
\newtheorem{Cor}{Corollary}[section]
\newtheorem{Rem}{Remark}[section]
\newtheorem{Prop}{Proposition}[section]
\newtheorem{Def}{Definition}[section]




%


\title{Active set complexity of the Away--step Frank--Wolfe Algorithm}

\author{ Immanuel~M.~Bomze\thanks{ISOR, VCOR \& ds:UniVie,
 	Universit\"{a}t Wien, Austria
 	(\tt{immanuel.bomze@univie.ac.at})}
  \and
  Francesco~Rinaldi\thanks{Dipartimento di Matematica ``Tullio Levi-Civita'', Universit\`a
    di Padova, Italy
     (\tt{rinaldi@math.unipd.it})}
     \and 
     	Damiano~Zeffiro\thanks{Dipartimento di Matematica ``Tullio Levi-Civita'', Universit\`a
		di Padova, Italy
		(\tt{damiano.zeffiro@math.unipd.it})}
}

\begin{document}

\maketitle

 \begin{abstract} In this paper, we study active set identification results for the away-step Frank-Wolfe algorithm in different settings. We first prove a local
 identification 
 property that we apply, in combination with a convergence hypothesis, to get an active set identification result.  We then prove, in the nonconvex case, a novel $O(1/\sqrt{k})$ convergence rate result and active set identification for different stepsizes (under suitable assumptions on the set of stationary points). By exploiting those results, we also give explicit active set complexity bounds for both strongly convex and nonconvex objectives. While we initially consider the probability simplex as feasible set, in the appendix we show how to adapt some of our results to generic polytopes.  
 \end{abstract}

\begin{keywords}Surface Identification, Manifold Identification, Active Set Complexity\end{keywords}

  \begin{AMS}65K05, 90C06, 90C30\end{AMS}

    \pagestyle{myheadings}
    \thispagestyle{plain}
    \markboth{I. M. BOMZE, F. RINALDI, D. ZEFFIRO}{ACTIVE SET COMPLEXITY OF THE AFW ALGORITHM}


\section{Introduction}
Identifying a surface containing a solution (and/or the support of sparse solutions) represents a 
relevant task in optimization, since it allows to reduce the dimension of the problem at handle and to apply 
a more sophisticated method in the end (see, e.g. \cite{bertsekas:1982,birgin:2002,2017arXiv170307761C,cristofari:2018new,desantis:2012,hager2011gradient,hager:2006,hager2016active}). 
This is the reason why, in the last decades, identification properties 
of optimization methods have been the subject of extensive studies.

The Frank-Wolfe (FW) algorithm, first introduced in \cite{frank1956algorithm}, is a classic  first-order optimization method that  has  
recently re-gained  popularity thanks to the way it can easily handle the structured constraints appearing in many real-world applications. 
This method and its variants have been indeed applied in the context of, e.g., submodular optimization problems \cite{bach2013learning}, 
variational inference problems \cite{krishnan2015barrier} and sparse neural network training \cite{grigas2019stochastic}. 
It is important to notice that the FW approach has a relevant drawback with respect to other algorithms:  even when dealing
with the  simplest polytopes, it cannot identify the active set in finite time (see, e.g., \cite{FOIMP}). 
Due to the renewed interest in the method, it has hence become a relevant issue to determine whether 
some FW variants admit active set identification properties similar to those of other first order methods. 
In this paper we focus on the  away-step  Frank-Wolfe (AFW) method  and analyze active set identification properties
for problems of the form
\begin{equation*} 
    \tx{min} \left\{f(x) \ | \ x \in \Delta_{n - 1} \right\},
    \end{equation*}
where the objective $f$ is a 
differentiable function with Lipschitz regular gradient and the feasible set   
$$\Delta_{n - 1}=\left\{x\in \R^n: \,\displaystyle \sum_{i=1}^n x_i = 1, \, x \ge 0\right\}$$ 
is the probability simplex. We further extend some of the active set 
complexity results to general polytopes.\\
\subsection{Contributions} It is a classic result that on polytopes and under strict complementarity conditions 
the AFW with exact linesearch identifies the face containing the minimum in finite time for strongly convex 
objectives \cite{guelat1986some}. More general active set identification properties for Frank-Wolfe variants 
have recently been  analyzed in \cite{FOIMP}, where the authors proved active set identification for sequences convergent 
to a stationary point, and  AFW convergence to a stationary point for $C^2$ objectives with a finite number 
of stationary points and satisfying a technical convexity-concavity assumption (this assumption is substantially a generalization 
of a property related to quadratic possibly indefinite functions). 
The main contributions of this article with respect to \cite{FOIMP} are twofold: \\
\begin{itemize}
 \item First, we give quantitative local and global active set identification complexity bounds
under suitable assumptions on the objective. The key element in the computation of those bounds 
is a quantity that we call "active set radius". This radius determines a neighborhood of a stationary point for which the AFW at each iteration identifies an active constraint (if there is any not yet identified one). 
In particular, to get the active set complexity bound it is sufficient to know how many iterations it takes 
for the AFW sequence to enter this neighborhood. \\
\item Second, we analyze the identification properties of AFW without the technical 
convexity-concavity $C^2$ assumption used in \cite{FOIMP} (we consider general nonconvex objectives with Lipschitz gradient instead). 
More specifically, we prove active set identification under different conditions on the stepsize and some additional hypotheses on the support of  stationary points. \\
\end{itemize}
In order to prove our results, we consider stepsizes dependent on 
the Lipschitz constant of the gradient (see, e.g., \cite{balashov2019gradient}, \cite{iusem2003convergence} and references therein). 
By exploiting the affine invariance property  of the AFW (see, e.g., \cite{jaggi2013revisiting}), we also extend some of the results to generic polytopes. In our  analysis we will see how the AFW identification properties are related to the value of Lagrangian multipliers on stationary points. 
This, to the best of our knowledge, is the first time that some active set complexity bounds are given for a variant of the FW algorithm.

The paper is organized as follows:
after presenting the AFW method and the setting in Section~\ref{prel}, we study the local behaviour of this algorithm regarding the active set in Section~\ref{asradius}. In Section~\ref{asbds} we provide active set identification results in a quite general context, and apply these to the strongly convex case for obtaining complexity bounds. Section~\ref{S:nonconv} treats the nonconvex case, giving both global and local active set complexity bounds. In the final Section~\ref{concl} we draw some conclusions. To improve readability, some technical details are deferred to an appendix.

\subsection{Related work} In \cite{burke1988identification}  the authors proved that the projected gradient method and other converging 
sequential quadratic programming methods identify quasi-polyhedral faces under some nondegeneracy conditions. 
  In \cite{burke1994exposing} those results were extended to the case of exposed faces in polyhedral sets without the nondegeneracy assumptions. 
  This extension is particularly relevant to our work since the identification of exposed faces in polyhedral sets is 
  the framework that we will use in studying the AFW on polytopes. In \cite{wright1993identifiable} the results of 
  \cite{burke1988identification} were generalized to certain nonpolyhedral surfaces called "$C^p$ identifiable" contained
  in the boundary of convex sets.  A key insight in these early works was the openness of a generalized normal cone defined 
  for the identifiable surface containing a nondegenerate stationary point. This openness guarantees that, in a neighborhood of 
  the stationary point, the projection of the gradient identifies the related surface. It turns out that for linearly constrained 
  sets the generalized normal cone is related to positive Lagrangian multipliers on the stationary point.\\ 
  A generalization of \cite{burke1988identification} to nonconvex sets was proved in \cite{burke1990identification}, while an extension 
  to nonsmooth objectives was first proved in \cite{hare2004identifying}. Active set identification results have also been proved for 
  a variety of projected gradient, proximal gradient and stochastic gradient related methods (see for instance \cite{sun2019we} and 
  references therein). \\ 
 Recently, explicit active set complexity bounds have been given for some of the methods listed above. 
 Bounds for proximal gradient and block coordinate descent method were analyzed in \cite{nutini2019active} and \cite{nutini2017let} 
 under strong convexity assumptions on the objective. 
 A more systematic analysis covering many gradient related proximal methods
 (like, e.g., accelerated gradient, quasi Newton and stochastic gradient proximal methods)
 was carried out in \cite{sun2019we}. \\
 As for FW-like methods, in addition to the results in \cite{guelat1986some} and \cite{FOIMP} discussed earlier, identification 
 results have been proved in \cite{clarkson2010coresets} for  fully corrective  variants on the probability simplex. However, since fully 
 corrective variants require to compute the minimum of the objective on a given face at each iteration, they are not suited 
 for nonconvex problems. 
\section{Preliminaries}\label{prel}
In the rest of this article $f: \Delta_{n - 1} \rightarrow \R$ will be a function with gradient having Lipschitz constant 
$L$ and $\mathcal{X}^*$ will be the set of stationary points of $f$. The constant $L$ will also be used as Lipschitz constant for 
$\nabla f$ with respect to the norm $\n{\cdot}_1$. This does not require any additional hypothesis on $f$ since  $\n{\cdot}_1 \geq \n{\cdot}$, so that
\begin{equation*}
\n{\nabla f(x) - \nabla f(y)} \leq L \n{x-y} \leq L \n{x- y }_1
\end{equation*}
for every $x, y \in \Delta_{n - 1}$. \\
For $x \in \R^n$, $X \subset \R^n$ the function $\textnormal{dist}(x, X)$ will be the standard point 
set distance and for $A \subset \R^n$ the function $\textnormal{dist}(A, X)$ will be the minimal distance between points in the sets: 
\begin{equation*}
\textnormal{dist}(A, X) = \inf_{a \in A, x \in X}  \|a-x\| \, .
\end{equation*}  
We define $\textnormal{dist}_1$ in the same way but with respect to $\| \cdot \|_1$. We use the notation 
\begin{equation*}
	\supp(x) = \{i \in [1:n] \ | \ x_i \neq 0 \}
\end{equation*}
for the support of a point $x \in \mathbb{R}^n$. \\
Given a (convex and bounded) polytope 
$P$ and a vector $c$ we define the face of $P$ \textit{exposed by} $c$ as 
\begin{equation*}
\mathcal{F}(c) =\textnormal{argmax}\{c^\top x \ | \ x \in P \} \ .
\end{equation*}
It follows from the definition that the face of $P$ exposed by a linear function is always unique and nonempty. \\
We now introduce the multiplier functions, which were recently used in \cite{2017arXiv170307761C} to define an active
set strategy for minimization over the probability simplex.\\
For every $x \in \Delta_{n-1}$, $i \in [1:n]$ the multiplier function $\lambda_i: \Delta_{n - 1} \rightarrow \R$
is defined as $$ \lambda_i(x) = \nabla f(x)^\top(e_i - x), $$
or in vector form
\begin{equation*}
\lambda(x) = \nabla f(x) - x^\top\nabla f(x)e\ .
\end{equation*}
For every $x \in \mathcal{X}^*$ these functions coincide with the Lagrangian multipliers of the constraints $x_i \geq 0$.  \\
For a sequence $\{a_k\}_{k \in \mathbb{N}_0}$ we will drop the subscript and write simply $\{a_k\}$ (unless of course the sequence is defined on some other index set). \\
FW variants require a linear minimization oracle for the feasible set (the probability simplex in our case):
\begin{equation*}
    \tx{LMO}_{\Delta_{n-1}}(r) \in \argmin \{ x^\top r \ | \ x \in \Delta_{n-1}  \}.
\end{equation*}
Keeping in mind that  
$$\Delta_{n-1}=\tx{conv}(\{e_i,\ i=1,\dots,n \}),$$
we can assume that $\tx{LMO}_{\Delta_{n-1}}(r)$ always returns a vertex of the probability simplex, that
is 
$$\tx{LMO}_{\Delta_{n-1}}(r) = e_{\hat \imath}$$
with $\hat \imath \in \displaystyle\argmin_{i} r_i.$ 
 
Algorithm 1 is the classical FW method on the probability simplex. At each iteration, this first order method generates a descent direction 
that points from the current iterate $x_k$ to a vertex $s_k$ minimizing the scalar product with the gradient, and then moves along 
this search direction of a suitable stepsize if stationarity conditions are not satisfied. 
\vspace{3mm}
\begin{center}
	    \begin{tabular}{|l|}
	       \hline 
	       \textbf{Algorithm 1} Frank--Wolfe method on the probability simplex \\
	       \hline
	     1. \textbf{Initialize} $x_0 \in \Delta_{n-1}$, $k := 0$  \\
		2. Set $s_k :=e_{\hat\imath},$ with $\hat\imath\in\displaystyle\argmin_{i} \nabla_i f(x_k)$ and $d_k^{\mathcal{FW}} := s_k - x_k$ \\
		3. If $x_k$ is stationary, then STOP  \\
		4. Choose the step size $\alpha_k \in (0, 1]$ with a suitable criterion  \\
		5. Update: $x_{k+1} := x_k + \alpha_k d^{\mathcal{FW}}_k$ \\
		6. Set $k := k+1$. Go to Step 2. \\
		\hline
	    \end{tabular}
\end{center}
\vspace{3mm}
It is well known \cite{canon1968tight,wolfe1970convergence} that 
the method exhibits a zig zagging behaviour as the sequence of iterates $\{x_k\}$ approaches a solution on the 
boundary of the feasible set. In particular, when this happens the sequence $\{x_k\}$ converges 
slowly and, as we already mentioned, it does not identify the smallest face containing the solution in finite time. 
Both of these issues are solved by the away-step variant of the FW method, reported in Algorithm 2.
The AFW at every iteration chooses between the classic FW direction and the away-step direction $d_k^{\mathcal{A}}$ 
calculated at Step 4. This away direction shifts weight away from the worst vertex to the other vertices 
used to represent the iterate $x_k$.
Here the worst vertex (among those having 
positive weight in the iterate representation) is the one with the greatest scalar product with the gradient, or, equivalently, 
the one that maximizes the linear approximation of $f$ given by $\nabla f(x_k)$. The stepsize upper bound $\alpha_{k}^{\max}$ in Step 8 is the maximal 
possible for the away direction given the boundary conditions. When the algorithm performs an away step, we have that either the support 
of the current iterate stays the same or decreases of one (we get rid of the component whose index is associated to the away direction in case $\alpha_k = \alpha_k^{\max}$). 
On the other hand, when the algorithm performs a Frank Wolfe step, only the vertex given by the $\tx{LMO}$ is eventually added to the support of 
the current iterate. These two properties are fundamental for the active set identification of the AFW.
\vspace{3mm}
\begin{center}
	    \begin{tabular}{|l|}
	       \hline 
	       \textbf{Algorithm 2} Away--step Frank--Wolfe on the probability simplex \\
	       \hline
	     1. \ \textbf{Initialize} $x_0 \in \Delta_{n-1}$, $k := 0$  \\
		2. \ Set $s_k :=e_{\hat\imath},$ with $\hat\imath\in\displaystyle\argmin_{i} \nabla_i f(x_k)$ and $d_k^{\mathcal{FW}} := s_k - x_k$ \\
		3. \ If $x_k$ is stationary then STOP  \\ 
		4. \ Let $v_k :=e_{\hat\jmath},$ with $\hat\jmath\in\displaystyle\argmax_{j\in S_k} \nabla_j f(x_k)$, $S_k:=\{j: (x_k)_j > 0 \}$ and $d_k^{\mathcal{A}} := x_k-v_k$ \\
		5. \ If  $-\nabla f(x_k)^\top d_k^{\mathcal{FW}} \geq -\nabla f(x_k)^\top d_k^{\mathcal{A}}$ then \\
		6. \ \quad $d_k := d_k^{\mathcal{FW}}$, and  $\alpha_{k}^{\max} :=1$ \\
		7. \ else \\
		8. \ \quad $d_k := d_k^{\mathcal{A}}$, \text{and} $\alpha_k^{\max} := (x_k)_i/(1-(x_k)_i) $ \\
		9. \ End if \\
		10. Choose the step size $\alpha_k \in (0, \alpha_k^{\max}]$ with a suitable criterion  \\
		11. Update: $x_{k+1} := x_k + \alpha_k d_k$ \\
		12. $k := k+1$. Go to step 2. \\
		\hline
	    \end{tabular}
\end{center}
\vspace{3mm}
In our analysis, we will sometimes require a lower bound on the step size which is always satisfied by the exact linesearch and the Armijo rule for a proper choice of the parameters. 
\section{Local active set variables identification property of the AFW}\label{asradius}
In this section we prove a rather technical proposition which is the key tool to give quantitative estimates for the active set complexity. It states that when the sequence is close enough to a fixed stationary point at every step the AFW identifies one variable violating the complementarity conditions with respect to the multiplier functions on this stationary point (if it exists), and it sets the variable to $0$ with an away step. The main difficulty is giving a tight estimate for how close the sequence must be to a stationary point for this identifying away step to take place. \\
A lower bound on the size of the nonmaximal away steps is needed in the following theorem, otherwise of course the steps could be arbitrarily small and there could be no convergence at all. \\
Let $\{x_k\}$ be the sequence of points generated by the AFW.
We further indicate with  $x^*$ a fixed point in $\mathcal{X}^*$, with {\em the extended support}  $$I = \{i \in [1:n] \ | \ \lambda_i (x^*) = 0 \}$$ and with $I^c = \{ 1, ...n \}\sm I$.
Note that by complementary slackness, we have $x_j^*=0$ for all $j\in I^c$.

Before proving the main theorem we need to prove the following lemma to bound the Lipschitz constant of the multipliers on stationary points.
\begin{Lemma} \label{lipest}
	Given $h>0$, $x_k \in \Delta_{n - 1}$ such that $\|x_k - x^*\|_1 \leq h$ let $$O_k = \{i \in I^c \ | \ (x_k)_i = 0\}$$ and assume that $O_k \neq I^c$. Let $\delta_{k} = \max_{i \in [1:n]\sm O_k} \lambda_i(x^*)$. For every $i \in \{1, ..., n\}$: 
	\begin{equation} \label{lip}
	| \lambda_i (x^*) - \lambda_i(x_k) | \leq h(L + \frac{\delta_k}{2})\ .
	\end{equation}	
\end{Lemma}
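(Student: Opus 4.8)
The plan is to estimate each difference $|\lambda_i(x^*) - \lambda_i(x_k)|$ directly from the definition $\lambda_i(x) = \nabla f(x)^\top(e_i - x)$, splitting the perturbation into the part coming from the change in the gradient and the part coming from the change in the point at which the linear functional $e_i - (\cdot)$ is evaluated. Writing
\begin{equation*}
\lambda_i(x^*) - \lambda_i(x_k) = \nabla f(x^*)^\top(e_i - x^*) - \nabla f(x_k)^\top(e_i - x_k),
\end{equation*}
I would add and subtract $\nabla f(x_k)^\top(e_i - x^*)$ to get
\begin{equation*}
\lambda_i(x^*) - \lambda_i(x_k) = \bigl(\nabla f(x^*) - \nabla f(x_k)\bigr)^\top(e_i - x^*) + \nabla f(x_k)^\top(x_k - x^*).
\end{equation*}
The first term is bounded by $L\|x_k - x^*\|_1 \cdot \|e_i - x^*\|_\infty \le L h$ using the $\|\cdot\|_1$-Lipschitz bound on $\nabla f$ stated in the Preliminaries together with $\|e_i - x^*\|_\infty \le 1$ (since $x^* \in \Delta_{n-1}$ means all coordinates of $e_i - x^*$ lie in $[-1,1]$).

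The delicate part is the second term $\nabla f(x_k)^\top(x_k - x^*)$, where I must extract the factor $\delta_k/2$ rather than a crude bound involving $\|\nabla f(x_k)\|$. The idea is that $x_k - x^*$ lies in the tangent space of the simplex (its coordinates sum to zero), so $\nabla f(x_k)^\top(x_k - x^*)$ is unchanged if we replace $\nabla f(x_k)$ by $\nabla f(x_k) - c\,e$ for any scalar $c$; equivalently this quantity equals $\lambda(x_k)^\top(x_k - x^*)$ up to the same reasoning, and more usefully we can recenter using the multiplier at $x^*$. Concretely I would write $\nabla f(x_k)^\top(x_k - x^*) = \sum_i \lambda_i(x^*)(x_k - x^*)_i + (\nabla f(x_k) - \lambda(x^*) - c e)^\top(x_k-x^*)$ and choose the centering so that the leftover gradient-difference piece is again controlled by $L$ and folds into the first term, while the main piece $\sum_i \lambda_i(x^*)(x_k - x^*)_i$ is estimated coordinatewise. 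Here the hypotheses $O_k \ne I^c$ and the definition $\delta_k = \max_{i \notin O_k}\lambda_i(x^*)$ enter: for $i \in O_k$ one has $(x_k)_i = 0 = x^*_i$ (the latter by complementary slackness since $O_k \subseteq I^c$), so those coordinates contribute nothing; for the remaining coordinates $0 \le \lambda_i(x^*) \le \delta_k$, and since $\sum_i (x_k - x^*)_i = 0$ with $\|x_k - x^*\|_1 \le h$ the positive and negative parts each have mass at most $h/2$, yielding $|\sum_i \lambda_i(x^*)(x_k-x^*)_i| \le \delta_k \cdot h/2$.

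The main obstacle I anticipate is making the recentering argument clean: one must verify that after subtracting the appropriate multiple of $e$ from $\nabla f(x_k)$, the residual is exactly a gradient difference (so that its inner product with $x_k - x^*$ is bounded by $Lh$ and does not double-count), and that the coordinatewise bound $0 \le \lambda_i(x^*) \le \delta_k$ genuinely holds for all $i \notin O_k$ — nonnegativity of $\lambda_i(x^*)$ is the KKT/stationarity condition at $x^*$ (valid for $i$ in and out of the extended support, with $\lambda_i(x^*) = 0$ exactly on $I$), and the upper bound by $\delta_k$ is the definition of $\delta_k$ restricted to the index set $[1:n]\setminus O_k$. Once these two facts are in place, combining the two terms gives $|\lambda_i(x^*) - \lambda_i(x_k)| \le Lh + \tfrac{\delta_k}{2}h = h(L + \tfrac{\delta_k}{2})$, which is exactly \eqref{lip}.
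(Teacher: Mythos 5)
Your proposal is correct and follows essentially the same route as the paper: once your recentering is carried out (with $c=x^{*\top}\nabla f(x^*)$ the leftover is exactly $(\nabla f(x_k)-\nabla f(x^*))^\top(x_k-x^*)$, which merges with your first term into $(\nabla f(x^*)-\nabla f(x_k))^\top(e_i-x_k)$), your two pieces coincide with the paper's: the gradient-difference term bounded by $Lh$ via the $1$--$\infty$ H\"older pairing, and $\nabla f(x^*)^\top(x_k-x^*)$ bounded by $\delta_k h/2$ using $e^\top(x_k-x^*)=0$, the vanishing of $x_k-x^*$ on $O_k$, and the fact that the positive and negative parts each carry mass at most $h/2$. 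The only cosmetic difference is that you run the second bound coordinatewise through $0\le\lambda_i(x^*)\le\delta_k$, whereas the paper phrases it as $\nabla f(x^*)_M-\nabla f(x^*)_m\le\delta_k$.
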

\begin{proof}
By considering the definition of $\lambda(x)$, we can write
	\begin{eqnarray} \label{split}
	|{\lambda}_i(x_k) - {\lambda}_i(x^*)| &=& |\nabla f(x_k)_i - \nabla f (x^*)_i + \nabla f(x^*)^\top (x^*-x_k) + (\nabla f(x^*)-\nabla f(x_k))^\top x_k| \nonumber\\
	&\leq& |\nabla f(x^*)_i - \nabla f (x_k)_i  + (\nabla f(x_k)-\nabla f(x^*))^\top x_k| + | \nabla f(x^*)^\top(x^*-x_k)|\ .
	\end{eqnarray}
	By taking into account the fact that $x_k \in \Delta_{n-1}$ and gradient of $f$ is Lipschitz continuous, we have 
	\begin{eqnarray} \label{1piece}
  |\nabla f(x_k)_i - \nabla f (x^*)_i  + (\nabla f(x^*)-\nabla f(x_k))^\top x_k| &=& |(\nabla f(x^*)-\nabla f(x_k))^\top (x_k-e_i) | \nonumber\\ 
	                                                                            &\leq& \|\nabla f(x^*)-\nabla f(x_k)\|_1 \| x_k-e_i\|_{\infty}\\
	                                                                            &\leq& Lh,\nonumber
	\end{eqnarray}   
	where the last inequality is justified by  the H\"older inequality with exponents $1, \infty$. \\
	We now bound the second term in the right-hand side of \eqref{split}. Let 
	\begin{equation*}
	u_j = \max\{0, (x^*-x_k)_j\}, \ l_j = \max\{0, -(x^*-x_k)_j\}\, . 
	\end{equation*}
We have	$\sum_{j \in [1:n]} x^*_j = \sum_{j \in [1:n]} (x_k)_j = 1 $ since $\{x^*, x_k\}\subset \in \Delta_{n-1}$, so that
	\begin{equation*}
	\sum_{j \in [1:n]} (x^*-x_k)_j =  \sum_{j \in [1:n]} (u_j - l_j) = 0 \quad\mbox{and hence}\quad \sum_{j \in [1:n]} u_j = \sum_{i \in [1:n]} l_j.
	\end{equation*}
	Moreover, $ h'\myeq 2\sum_{j \in [1:n]} u_j = 2\sum_{j \in [1:n]} l_j = \sum_{j \in[1:n]} u_j + l_j  = \sum_{j \in [1:n]} |x^*_j - (x_k)_j| \leq h$, hence 
	\begin{equation*}
	h'/2 = \sum_{j \in [1:n]} u_j = \sum_{j \in [1:n]} l_j   \leq h/2\ .
	\end{equation*}
	We can finally bound the second piece of \eqref{split}, using $u_j=l_j=0$ for all $j\in O_k$ (because $(x_k)_j =x_j^*=0$):
	\begin{eqnarray} \label{2piece}
	| \nabla f(x^*)^\top (x^*-x_k)|&=& | \nabla f(x^*)^\top k  - \nabla f(x^*)^\top l |\leq \frac{h'}{2}(\nabla f(x^*)_M - \nabla f(x^*)_m) \nonumber\\
	                          &\leq& \frac{h}{2}(\nabla f(x^*)_M - \nabla f(x^*)_m),
	\end{eqnarray}
	where $\nabla f(x_k)_M$ and $\nabla f(x_k)_m$ are respectively the maximum and minimum component of the gradient in $[1:n]\setminus O_k$.\\
	Now, considering inequalities  \eqref{split}, \eqref{1piece} and \eqref{2piece}, we can write
	\begin{equation*}
	|{\lambda}_i(x_k) - {\lambda}_i(x^*)|\leq Lh+ \frac{h}{2}(\nabla f(x^*)_M - \nabla f(x^*)_m).
	\end{equation*}
	By taking into account the definition of $\delta_k$ and the fact that $\lambda(x^*)_j\geq 0$ for all $j$, we can write  
	$$\delta_{k} = \max_{i,j \in [1:n]\sm O_k} (\nabla f(x^*)_i-\nabla f(x^*)_j)\geq \nabla f(x^*)_M-\nabla f(x^*)_m.$$
	We can finally write
	$$
	|{\lambda}_i(x_k) - {\lambda}_i(x^*)|\leq h(L+\frac{\delta_k}{2}),
	$$
thus concluding the proof.
\end{proof}
We now show a few simple but important results that connect the multipliers and the directions selected by the AFW algorithm. Notice that for a fixed $x_k$ the multipliers $\lambda_i(x_k)$ are the values of the linear function $x \mapsto \nabla f(x_k)^\top x$ on the vertices of $\Delta_{n - 1}$ (up to a constant), which in turn are the values considered in the AFW to select the direction. This basic observation is essentially everything we need for the next results. 
\begin{Lemma} \label{awstep}
Let $S_k = \{ i \in \{1, ..., n\} \ | \ (x_k)_i > 0 \}$. Then
	\begin{itemize}
		\item[(a)] If $\max\{\lambda_i(x_k) \ | \ i \in S_k \} > \max\{- \lambda_i(x_k) \ | \ i \in [1:n] \}$, then the AFW performs an away step with $d_k = d_k^{\mathcal{A}} = x_k - e_{\hat\imath}$ for some 
		$i \in \textnormal{argmax} \{\lambda_i(x_k) \ | \ i \in S_k \}$. 
		\item[(b)] For every $i \in [1:n] \sm  S_k$ if $\lambda_i(x_k) > 0$ then $(x_{k+1})_i =(x_k)_i = 0$.  
	\end{itemize}
\end{Lemma}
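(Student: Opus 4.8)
The plan is to prove both parts directly from the description of the AFW rules in Algorithm 2, using the observation stated just before the lemma: for a fixed iterate $x_k$, the multipliers $\lambda_i(x_k) = \nabla f(x_k)^\top(e_i - x_k)$ are — up to the additive constant $-\nabla f(x_k)^\top x_k$ — exactly the values of the linear functional $e_i \mapsto \nabla f(x_k)^\top e_i$ on the vertices, which are the numbers compared in Steps 2, 4 and 5.

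For part (a): first I would rewrite the FW and away gains in terms of the multipliers. The FW direction satisfies $-\nabla f(x_k)^\top d_k^{\mathcal{FW}} = -\nabla f(x_k)^\top(s_k - x_k) = -\min_i \lambda_i(x_k) = \max_i\{-\lambda_i(x_k) \mid i \in [1:n]\}$, and the away direction satisfies $-\nabla f(x_k)^\top d_k^{\mathcal{A}} = -\nabla f(x_k)^\top(x_k - v_k) = \max_{j \in S_k} \lambda_j(x_k)$, since $v_k = e_{\hat\jmath}$ with $\hat\jmath \in \argmax_{j \in S_k}\nabla_j f(x_k)$ — and maximizing $\nabla_j f(x_k)$ over $j \in S_k$ is the same as maximizing $\lambda_j(x_k) = \nabla_j f(x_k) - \nabla f(x_k)^\top x_k$ over $j \in S_k$. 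Under the hypothesis $\max\{\lambda_i(x_k) \mid i \in S_k\} > \max\{-\lambda_i(x_k) \mid i \in [1:n]\}$, the away gain strictly exceeds the FW gain, so the test in Step 5 fails and the algorithm enters Step 8, setting $d_k = d_k^{\mathcal{A}} = x_k - e_{\hat\imath}$ with $\hat\imath \in \argmax_{i \in S_k}\nabla_i f(x_k) = \argmax_{i \in S_k}\lambda_i(x_k)$. One small point worth flagging: the strict inequality in Step 5 runs the other way ("$\geq$" selects the FW step), so I should note that strict dominance of the away gain indeed forces the \textbf{else} branch, with no tie-breaking subtlety.

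For part (b): if $i \notin S_k$ then $(x_k)_i = 0$, so $i$ does not belong to $S_k$ and hence $v_k \neq e_i$; therefore the away direction, if chosen, cannot decrease coordinate $i$. The only way coordinate $i$ could \emph{increase} from zero is via a FW step with $s_k = e_i$, i.e. $\hat\imath = i \in \argmin_j \nabla_j f(x_k)$, equivalently $i \in \argmin_j \lambda_j(x_k)$. But the hypothesis $\lambda_i(x_k) > 0$ together with $\lambda(x^*) \geq 0$... actually the cleaner argument is simply: $\lambda_i(x_k) > 0$ means $-\lambda_i(x_k) < 0 \leq \max_j\{-\lambda_j(x_k)\}$ (the max over all vertices is nonnegative since $\sum_j (x_k)_j \lambda_j(x_k) = 0$ forces some $\lambda_j(x_k) \leq 0$), so $i \notin \argmin_j \lambda_j(x_k)$ unless that argmin set also has value $\lambda_i(x_k)>0$, which would contradict $\min_j\lambda_j(x_k)\le 0$. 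Hence $s_k \neq e_i$, so neither an away step nor a FW step alters coordinate $i$, giving $(x_{k+1})_i = (x_k)_i = 0$.

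I do not anticipate a genuine obstacle here — the lemma is essentially a bookkeeping translation between the gradient-based quantities in the algorithm and the multiplier functions. The one place to be careful is making the FW/away gain identities with the multipliers precise (tracking the common additive constant $\nabla f(x_k)^\top x_k$ correctly so that the comparison in Step 5 is faithfully reproduced), and, in part (b), being explicit that a coordinate not in the current support can only be touched by a FW step adding the LMO vertex, never by an away step.
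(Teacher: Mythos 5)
Your proposal is correct and follows essentially the same route as the paper: for (a) you translate the Step 5 comparison into the multiplier identities $-\nabla f(x_k)^\top d_k^{\mathcal{FW}} = \max_i\{-\lambda_i(x_k)\}$ and $-\nabla f(x_k)^\top d_k^{\mathcal{A}} = \max_{i \in S_k}\lambda_i(x_k)$, and for (b) you rule out both the away vertex (not in $S_k$) and the LMO vertex (since $\min_j \lambda_j(x_k) \le 0 < \lambda_i(x_k)$). Your explicit justification that $\min_j\lambda_j(x_k)\le 0$ via $\sum_j (x_k)_j\lambda_j(x_k)=0$ is in fact slightly more detailed than the paper's terse assertion at that point.
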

\begin{proof}
	(a) Notice that since the vertices of the probability simplex are linearly independent for every $k$ the set of active atoms is necessarily $S_k$. In particular \\
	$d_k^{\mathcal{A}} \in \textnormal{argmax} \{ -\nabla f(x_k)^\top d) \ | \ d = x_k - e_i, i \in S_k \}$ and this implies
	\begin{equation} \label{awdir}
	d_k^{\mathcal{A}} = x_k - e_{\hat\imath} \quad \textnormal{for some } \hat\imath \in \textnormal{argmax}\{-\nabla f(x_k)^\top (x_k - e_i) \ | \ i \in S_k \} = \textnormal{argmax} \{\lambda_i(x_k) \ | \ i \in S_k \}\ .
	\end{equation}	
	As a consequence of \eqref{awdir} 
	\begin{equation} \label{lambda1}
	- \nabla f(x_k)^\top d_k^{\mathcal{A}}  = \max \{- \nabla f(x_k)^\top d \ | \  d= x_k- e_i, i \in S_k \} =  \max \{\lambda_i(x_k) \ | \  i \in S_k \}\ ,	
	\end{equation}	
	where the second equality follows from $\lambda_i(x_k) = -\nabla f(x_k)^\top  d$ with $d =  x_k-e_i$. \\ 
	Analogously
	\begin{equation} \label{lambda2} 
	\begin{aligned}
	- \nabla f(x_k)^\top d_k^{\mathcal{FW}} & = \max \{ - \nabla f(x_k)^\top d \ | \  d = e_i - x_k, i \in \{1, ...n \} \} = \\ & = \max \{ - \lambda_i(x_k) \ | \ i \in \{1, ...n \} \}\ .  	
	\end{aligned}
	\end{equation}	
	We can now prove that $ -\nabla f(x_k)^\top d_k^{\mathcal{FW}} < - \nabla f(x_k)^\top d_k^{\mathcal{A}}$, so that the away direction is selected under assumption (a):
	\begin{align*}
	& -\nabla f(x_k)^\top d_k^{\mathcal{FW}} = \max \{ - \lambda_i(x_k) \ | \ i \in \{1, ...n \} \} < \\ & < \max \{\lambda_i(x_k) \ | \  i \in S_k \} = - \nabla f(x_k)^\top d_k^{\mathcal{A}}, 	 
	\end{align*}
	where we used \eqref{lambda1} and \eqref{lambda2} for the first and the second equality respectively, and the inequality is true by hypothesis. \\
	(b) By considering the fact that $(x_k)_i=0$, we surely cannot choose the vertex $e_i$ to define the away-step direction. Furthermore, since $\lambda(x_k)_i=\nabla f(x_k)^\top (e_i-x_k)>0$,
	direction $d=e_i-x_k$ cannot be chosen as the Frank-Wolfe direction at step $k$ as well. This guarantees that $(x_{k+1})_i=0$.
\end{proof}
We can now prove the main theorem. The strategy will be to split $[1:n]$ in three subsets $I$, $J_k \subset I^c$ and $O_k = I^c \sm  J_k$ and use Lemma $\ref{lipest}$ to control the variation of the multiplier functions on each of these three subsets. 
In the proof we examine two possible cases under the assumption of being close enough to a stationary point. If $J_k = \emptyset$, which means that the current iteration of the AFW has identified the support of the stationary point, then we will show that the AFW chooses a direction contained in the support, so that also $J_{k+1} = \emptyset$.\\
If $J_k \neq \emptyset$, we will show that in the neighborhood claimed by the theorem the largest multiplier in absolute value is always positive, with index in $J_k$, and big enough, so that the corresponding away step is maximal. This means that the AFW at the iteration $k+1$ identifies a new active variable.  

\begin{Th} \label{ascmain}
If $I^c$ is not the empty set,
	let us define
	$$\delta_{\min} = \min\{\lambda_i(x^*) \ | \ i \in I^c \} > 0, \ J_k =  \{i \in I^c \ | \ (x_k)_i >0 \}\ .$$ Assume that for every $k$ such that $d_k = d_k^{\mathcal{A}}$ the step size $\alpha_k$ is either maximal with respect to the boundary condition (that is $\alpha_k = \alpha_k^{\max}$) or $\alpha_k \geq \frac{- \nabla f(x_k)^\top d_k}{L\|d_k\|^2} $. If $\| x_k - x^*\|_1 <\frac{\delta_{\min}}{\delta_{\min}+ 2L} = r_*$ then 
	\begin{equation} \label{actset}
	|J_{k+1}| \leq \max\{0, |J_k| - 1\}\ .
	\end{equation}
	The latter relation also holds in case $I^c=\emptyset$ whence we put $r_* = +\infty$.
\end{Th}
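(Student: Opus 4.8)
\emph{Setup and reduction.} The plan is to follow the three-block decomposition $[1:n]=I\sqcup J_k\sqcup O_k$ with $O_k=I^c\setminus J_k$ announced before the statement, to control the multiplier functions $\lambda_i(\cdot)$ on each block with Lemma~\ref{lipest}, and then to read off the AFW direction and step size from Lemma~\ref{awstep} together with the standing step-size assumption. Throughout write $\tilde h:=\|x_k-x^*\|_1<r_*$ and recall $x^*_j=0$ for $j\in I^c$ by complementary slackness. One may assume $x_k$ is not stationary (otherwise $x_{k+1}$ is undefined) and $I^c\neq\emptyset$ (otherwise $J_k=\emptyset$ for all $k$ and \eqref{actset} is trivial). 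The two cases to treat are $J_k=\emptyset$ and $J_k\neq\emptyset$.

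\emph{The case $J_k=\emptyset$.} Here $\supp(x_k)\subseteq I$, and it suffices to show $\supp(x_{k+1})\subseteq I$, which gives $J_{k+1}=\emptyset$. An away step cannot enlarge the support, so the only thing to verify is that, when a Frank--Wolfe step is taken, the vertex $e_{\hat\jmath}$ returned by the LMO has $\hat\jmath\in I$; since $\argmin_i\nabla_i f(x_k)=\argmin_i\lambda_i(x_k)$, this amounts to $\argmin_i\lambda_i(x_k)\subseteq I$. Now $O_k=I^c$, so Lemma~\ref{lipest} is not literally available, but its proof still delivers $|\lambda_i(x_k)-\lambda_i(x^*)|\le L\tilde h$ for all $i$: the term $|\nabla f(x^*)^\top(x^*-x_k)|$ bounded in \eqref{2piece} vanishes, because $\nabla f(x^*)$ is constant on $I$ (from the identity $\nabla_i f(x^*)=\lambda_i(x^*)+\nabla f(x^*)^\top x^*$) while $\supp(x_k)\cup\supp(x^*)\subseteq I$. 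Hence $\lambda_i(x_k)\ge\delta_{\min}-L\tilde h$ for $i\in I^c$, whereas $\min_i\lambda_i(x_k)\le\lambda_j(x_k)\le L\tilde h$ for any $j\in\supp(x^*)$; and $\tilde h<r_*<\delta_{\min}/(2L)$ gives $\delta_{\min}-L\tilde h>L\tilde h$, so no index of $I^c$ minimizes $\lambda(x_k)$, settling this case.

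\emph{The case $J_k\neq\emptyset$.} Now $O_k\neq I^c$, so Lemma~\ref{lipest} applies; its $\delta_k$ equals $\max_{i\in J_k}\lambda_i(x^*)\ge\delta_{\min}$ here (as $O_k=I^c\setminus J_k$ and $\lambda(x^*)$ vanishes on $I$), and with $C:=L+\delta_k/2$ it gives $|\lambda_i(x_k)-\lambda_i(x^*)|\le\tilde hC$ for all $i$. Since $t\mapsto t/(t+2L)$ is increasing, $\tilde h<r_*\le\delta_k/(\delta_k+2L)$, which rearranges to $\delta_k-\tilde hC>\tilde hC\ge0$. Using $-\lambda_i(x_k)\le\tilde hC$ for $i\in I$ and $-\lambda_i(x_k)\le-\delta_{\min}+\tilde hC<\tilde hC$ for $i\in I^c$, one gets
\[\max_{i\in S_k}\lambda_i(x_k)\ \ge\ \max_{i\in J_k}\lambda_i(x_k)\ \ge\ \delta_k-\tilde hC\ >\ \tilde hC\ \ge\ \max_{i\in[1:n]}\bigl(-\lambda_i(x_k)\bigr),\]
so Lemma~\ref{awstep}(a) shows the AFW performs an away step $d_k=x_k-e_{\hat\imath}$ with $\hat\imath\in\argmax\{\lambda_i(x_k):i\in S_k\}$; this argmax lies in $J_k$, because for $i\in S_k\cap I$ one has $\lambda_i(x_k)\le\tilde hC<\delta_k-\tilde hC\le\max_{i\in S_k}\lambda_i(x_k)$. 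Hence $\hat\imath\in J_k$.

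\emph{Forcing the drop step, and the main obstacle.} It remains to show $\alpha_k=\alpha_k^{\max}$; granting this, $(x_{k+1})_{\hat\imath}=0$ and $(x_{k+1})_i=(x_k)_i/(1-(x_k)_{\hat\imath})$ for $i\neq\hat\imath$, so $\supp(x_{k+1})=\supp(x_k)\setminus\{\hat\imath\}$, hence $J_{k+1}=J_k\setminus\{\hat\imath\}$ and $|J_{k+1}|=|J_k|-1=\max\{0,|J_k|-1\}$. Put $p:=(x_k)_{\hat\imath}>0$. Because $x^*_{\hat\imath}=0$ and $\sum_j\bigl((x_k)_j-x^*_j\bigr)=0$, the coordinate $\hat\imath$ belongs to the positive part of $x_k-x^*$, of total mass $\tfrac12\|x_k-x^*\|_1$, so $p\le\tilde h/2$. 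Also $\|d_k\|^2=(1-p)^2+\sum_{j\neq\hat\imath}(x_k)_j^2\le2(1-p)^2$ and $-\nabla f(x_k)^\top d_k=\lambda_{\hat\imath}(x_k)\ge\delta_k-\tilde hC$. Finally $\tilde h<r_*<2\delta_{\min}/(\delta_{\min}+4L)\le2\delta_k/(\delta_k+4L)$ (the first inequality is $\delta_{\min}+4L<2\delta_{\min}+4L$, the second monotonicity), which rearranges to $\delta_k-\tilde hC>L\tilde h\ge2Lp$, whence
\[\frac{-\nabla f(x_k)^\top d_k}{L\|d_k\|^2}\ \ge\ \frac{\delta_k-\tilde hC}{2L(1-p)^2}\ >\ \frac{2Lp}{2L(1-p)^2}\ \ge\ \frac{p}{1-p}\ =\ \alpha_k^{\max}.\]
Since the step rule forces $\alpha_k\le\alpha_k^{\max}$, and by hypothesis either $\alpha_k=\alpha_k^{\max}$ or $\alpha_k$ is at least the left-hand side above, we conclude $\alpha_k=\alpha_k^{\max}$, completing the proof. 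I expect this last display to be the real difficulty: one must convert mere closeness to $x^*$ into the assertion that the unconstrained model step $-\nabla f(x_k)^\top d_k/(L\|d_k\|^2)$ already exceeds $\alpha_k^{\max}$, so the away step is forced to zero out $(x_k)_{\hat\imath}$; the sharp radius $r_*=\delta_{\min}/(\delta_{\min}+2L)$ comes out only because $p=(x_k)_{\hat\imath}\le\tfrac12\|x_k-x^*\|_1$ — the factor $2$ afforded precisely by $x^*_{\hat\imath}=0$ — whereas the naive bound $p\le\|x_k-x^*\|_1$ would only close the argument under an extra lower bound on $\delta_{\min}/L$. By comparison, checking that an away step aimed at an index of $J_k$ is selected, and handling the case $J_k=\emptyset$, are routine once the estimate $\delta_k-\tilde hC>\tilde hC$ is in hand.
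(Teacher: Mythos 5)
Your proof is correct and follows essentially the same route as the paper: the decomposition $[1:n]=I\sqcup J_k\sqcup O_k$, Lemma~\ref{lipest} to control the multipliers on each block, Lemma~\ref{awstep} to force an away step toward an index of $J_k$, and the estimate $(x_k)_{\hat\imath}\le\tfrac12\|x_k-x^*\|_1$ to show the step is maximal. The only (harmless) differences are that you prove maximality directly by showing the model step exceeds $\alpha_k^{\max}$ where the paper argues by contradiction via $(x_{k+1})_{\hat\jmath}<0$, and that in the case $J_k=\emptyset$ you correctly note that Lemma~\ref{lipest} is stated only for $O_k\neq I^c$ and re-derive the needed bound $|\lambda_i(x_k)-\lambda_i(x^*)|\le L\|x_k-x^*\|_1$ (the paper applies the lemma with $\delta_k=0$ without comment).
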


\begin{proof}
	If $ I^c = \emptyset$, or equivalently, if $\lambda(x^*) = 0$, then there is nothing to prove since $J_k \subset I^c = \emptyset \Rightarrow	|J_k|= |J_{k+1}|= 0$. \\
	So assume $I^c \neq \emptyset$. By optimality conditions $\lambda_i(x^*) \geq 0$ for every $i$, so necessarily $\delta_{\min} > 0$. \\
	For every $i \in [1:n]$, by Lemma \ref{lipest} 
	\begin{equation} \label{lambdaineq}
	\begin{aligned}
	\lambda_i(x_k) 
	&\geq  \lambda_i(x^*) - \|x_k-x^*\|_1(L + \frac{\delta_k}{2}) >\\ &>\lambda_i(x^*) - r_*(L + \frac{\delta_k}{2}) =\lambda_i(x^*) - \frac{\delta_{\min}(L + \frac{\delta_k}{2})}{2L  + \delta_{\min}}\ .
	\end{aligned}
	\end{equation}
	We now distinguish two cases. \\
	\textbf{Case 1:} $|J_k| = 0$. Then $\delta_k = 0$ because $J_k \cup I = I$ and $\lambda_i(x^*) = 0$ for every $i \in I$. Relation \eqref{lambdaineq} becomes 
	\begin{equation*}
	\lambda_i(x_k) \geq\lambda_i(x^*) - \frac{\delta_{\min}L}{2L  + \delta_{\min}}, 
	\end{equation*}
	so that for every $i \in I^c$, since $\lambda_i(x^*) \geq \delta_{\min}$, we have
	\begin{equation} \label{lambdaik}
	\lambda_i(x_k) \geq \delta_{\min} - \frac{\delta_{\min}L}{2L  + \delta_{\min}} > 0\ .
	\end{equation}
	This means that for every $i \in I^c$ we have $(x_k)_i = 0$ by the Case 1 condition $J_k = \emptyset$ and $\lambda_i(x_k) > 0$ by \eqref{lambdaik}. We can then apply part (b) of Lemma \ref{awstep}  and conclude $(x_{k+1})_i = 0$ for every $i \in I^c$. Hence $J_{k+1}= \emptyset = J_k $ and Theorem \ref{ascmain} is proved in this case. \\	
	\textbf{Case 2.} $|J_k| > 0$.
	For every $i \in \textnormal{argmax}\{\lambda_j(x^*) \ | \ j \in J_k\}$, we have
	$$\lambda_i(x^*) = \max_{j \in J_k} \lambda_j(x^*) =  \max_{j \in J_k\cup I} \lambda_j(x^*), $$
	where we used the fact that $\lambda_j(x^*) = 0 < \lambda_i(x^*)$ for every $j \in I$. Then by the definition of $\delta_k$, it follows 
	$$ \lambda_i(x^*) = \delta_k. $$
	Thus \eqref{lambdaineq} implies
	\begin{equation} \label{first}
	\begin{aligned}
		\lambda_i(x_k) > \lambda_i(x^*) - \frac{\delta_{\min}(L + \frac{\delta_k}{2})}{2L  + \delta_{\min}} = \delta_k - \frac{\delta_{\min}(L + \frac{\delta_k}{2})}{2L  + \delta_{\min}}, 
	\end{aligned}
	\end{equation} 
	where we used \eqref{lambdaineq} in the inequality. 
	But since $\delta_k \geq \delta_{\min}$ and the function $y \mapsto - \frac{y}{2L + y}$ is decreasing in $\R_{> 0}$ we have
	\begin{equation} \label{first2}
		\delta_k - \frac{\delta_{\min}(L + \frac{\delta_k}{2})}{2L  + \delta_{\min}}  \geq \delta_k - \frac{\delta_{k}(L + \frac{\delta_k}{2})}{2L  + \delta_{k}} = \frac{\delta_k}{2}\ .
	\end{equation}
	Concatenating $\eqref{first}$ with $\eqref{first2}$, we finally obtain 
	\begin{equation}\label{newlab}
		\lambda_i(x_k) > \frac{\delta_k}{2}\ . 
	\end{equation}
	We will now show that $d_k = x_k - e_{\hat\jmath}$ with $\hat\jmath \in J_k$. \\
	For every $j \in I$, since $\lambda_j(x^*) = 0$, again by Lemma \ref{lipest}, we have
	\begin{equation} \label{Ibound}
	\begin{aligned}
	|\lambda_j(x_k)| &= |\lambda_j(x_k) - \lambda_j(x^*)| \leq  \|x_k-x^*\|_1(L + \delta_k/2) < \\
	 & < r_*(L + \delta_k / 2) = \frac{\delta_{\min}(L + \frac{\delta_k}{2})}{2L  + \delta_{\min}} \leq \delta_k/2,
	\end{aligned}
	\end{equation}
	where we used $\n{x_k - x^*}_1 < r_* $, which is true by definition, in the first inequality, and rearranged \eqref{first2} to get the last inequality. 
	For every $j \in I^c$, by \eqref{lambdaineq}, we can write
	\begin{equation*}
	\lambda_j(x_k) > \delta_{\min} - \frac{\delta_{\min}(L + \frac{\delta_k}{2})}{2L  + \delta_{\min}} > - \frac{\delta_k}{2}\ .
	\end{equation*}
	Then using this together with \eqref{Ibound} and \eqref{first}, we get $- \lambda_j (x_k) < \delta_k/2 < \lambda_h(x_k)$ for every $j \in [1:n], h \in \textnormal{argmax}\{\lambda_q(x^*) \ | \ q \in J_k \}$. So the hypothesis of Lemma \ref{awstep} is satisfied and $d_k = d_k^{\mathcal{A}} = x_k - e_{\hat\jmath}$ with $\hat\jmath \in \textnormal{argmax}\{\lambda_j(x_k) \ | \ j \in S_k \}$. 
	We need to show $\hat\jmath \in J_k$. But $S_k\subseteq I \cup J_k$ and by  \eqref{Ibound} if $\hat\jmath \in I$ then $\lambda_l(x_k) < \delta_k/2 < \lambda_j(x_k)$ for every $j \in \textnormal{argmax}\{\lambda_j(x^*) \ | \ j \in J_k\}$. If $\hat\jmath \in O_k$ then $(x_k)_{\hat\jmath} = 0$ and $\hat\jmath \notin S_k$.
	Hence we can conclude $\textnormal{argmax}\{\lambda_j(x_k) \ | \ j \in S_k \} \subseteq J_k$ and $d_k = x_k - e_{\hat\jmath}$ with $\hat\jmath \in J_k$. In particular, by \eqref{newlab} we get
	\begin{equation} \label{lamb>del}
	 \max\{\lambda_j(x_k) \ | \ j \in J_k \}=\lambda_{\hat\jmath}(x_k)  > \frac{\delta_k}{2}\, .
	\end{equation}
	We now want to show that $\alpha_k = \alpha_k^{\max}$. Assume by contradiction $\alpha_k < \alpha_{\max}$. Then by the lower bound on the stepsize and~\eqref{newlab}
	\begin{equation}\label{alphai}
	\begin{aligned}
	\alpha_k \geq \frac{-\nabla f(x_k)^\top d_k}{L\|d_k\|^2} = \frac{\lambda_i(x_k)}{L\|d_k\|^2} \geq \frac{\delta_{\min}}{2L\|d_k\|^2}\, ,
	\end{aligned}
	\end{equation}
	where in the last inequality we used \eqref{lamb>del} together with $\delta_k \geq \delta_{\min}$. Also, by Lemma \ref{simpelem}
	\begin{equation} \label{dk<}
	\begin{aligned}
	\|d_k \| & = \| e_{\hat\jmath} - x_k\| \leq \sqrt{2} (e_{\hat\jmath} - x_k)_{\hat\jmath} = -\sqrt{2}(d_k)_{\hat\jmath} \Rightarrow \frac{(d_k)_{\hat\jmath}}{\|d_k\|^2} \leq\frac{(d_k)_{\hat\jmath}}{\|d_k\|\sqrt{2}}\leq - 1/2\\
	(x_k)_{\hat\jmath} &= (x_k - x^*)_{\hat\jmath} \leq \frac{\|x_k - x^*\|_1}{2} < \frac{r_*}{2} = \frac{\delta_{\min}}{4L + 2\delta_{\min}}.
	\end{aligned}
	\end{equation}
	Finally, combining \eqref{dk<} with \eqref{alphai}
	\begin{align*}
	(x_{k+1})_{\hat\jmath} &= (x_k)_{\hat\jmath} + (d_k)_{\hat\jmath} \alpha_k < \frac{r_*}{2} -  \frac{\|d_k\|^2}{2} \alpha_k\leq \frac{r_*}{2} -  \frac{\|d_k\|^2}{2} \frac{\delta_{\min}}{2L\|d_k\|^2}\\ 
	&= \frac{\delta_{\min}}{4L + 2\delta_{\min}} - \frac{\delta_{\min}}{4L} < 0,
	\end{align*}
	where we used \eqref{alphai} to bound $\alpha_k$ in the first inequality, \eqref{dk<} to bound $(x_k)_{\hat\jmath}$ and $\frac{(d_k)_{\hat\jmath}}{\|d_k\|^2}$. Hence $(x_{k+1})_{\hat\jmath}< 0$, contradiction.  
\end{proof}

\section{Active set complexity bounds}\label{asbds}
Before giving the active set complexity bounds in several settings it is important to clarify that by active set associated to a stationary point $x^*$ we do not mean the set $\mathrm{supp}(x^*)^c = \{i \in [1:n] \ | \ (x^*)_i = 0\}\}$ but the set $I^c(x^*) = \{i \in [1:n] \ | \ \lambda_i(x^*) > 0 \}$. In general $I^c(x^*) \subset \supp(x^*)^c$ by complementarity conditions, with 
\begin{equation} \label{eq:supp}
\supp(x^*)^c = I^c(x^*)	\Leftrightarrow \tx{complementarity conditions are strict in $x^*$}.
\end{equation}
The face $\mathcal{F}$ of $\Delta_{n - 1}$ defined by the constraints with indices in $I^c(x^*)$ still has a nice geometrical interpretation: it is the face of $\Delta_{n-1}$ exposed by $-\nabla f(x^*)$. \\
It is at this point natural to require that the sequence $\{x_k\}$ converges to a subset $A$ of $\mathcal{X}^*$ for which $I^c$ is constant. This motivates the following definition:
\begin{Def} \label{support}
	A compact subset $A$ of $\mathcal{X}^*$ is said to have the {\em support identification property (SIP)} if there exists an index set $I^c_A \subset [1:n]$ such that 
	$$I^c (x)=I^c_A \quad \mbox{for all }x\in A\, .$$ 
\end{Def}
The geometrical interpretation of the above definition is the following: for every point in the subset $A$ the negative gradient $-\nabla f(x^*)$ exposes the same face. This is trivially true if $A$ is a singleton, and it is also true if for instance $A$ is contained in the relative interior of a face of $\Delta_{n - 1}$ and strict complementarity conditions hold for every point in this face. 
We further define $$ \delta_{\min}(A) =\min\{\lambda_i(x) \ | \ x \in A, \  i \in I^c_A\}\ . $$
Notice that by the compactness of $A$ we always have $\delta_{\min}(A) > 0$ if $A$ enjoys the SIP. We can finally give a rigorous definition of what it means to solve the active set problem:
\begin{Def}
	Consider an algorithm generating a sequence $\{x_k\}$ converging to a subset $A$ of $\mathcal{X}^*$ enjoying the SIP. We will say that this algorithm solves the active set problem in $M$ steps if  $(x_k)_i = 0$ for every $i \in I^c_A$, $k\geq M$. 
\end{Def} 
We can now apply Theorem \ref{ascmain} to show that once a sequence is definitely close enough to a set enjoying the SIP, the AFW identifies the active set in at most $|I^c|$ steps. We first need to define a quantity that we will use as a lower bound on the stepsizes:
\begin{equation}    \label{alphabound}
	\bar{\alpha}_k = \min\left(\alpha_k^{\max}, \frac{-\nabla f(x_k)^\top d_k }{{L\n{d_k}^2}} \right)\ ,
\end{equation}
\begin{Th} \label{activecompl}
	Let $\{x_k\}$ be a sequence generated by the AFW, with stepsize $\alpha_k \geq \bar{\alpha}_k$. Let $\mathcal{X}^*$ be the set of stationary points of a function $f: \Delta_{n - 1} \rightarrow \mathbb{R}$ with $\nabla f$ 
	having Lipschitz constant $L$. Assume that 
	 there exists a compact subset $A$ of $\mathcal{X}^*$ with the SIP such that $x_k \rightarrow A$.
Then there exists $M$ such that 
$$(x_k)_i = 0\quad\mbox{ for every }k \geq M\mbox{ and all }i \in I^c_A\,.$$ 
\end{Th}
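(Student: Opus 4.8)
\emph{Proof idea.} The plan is to reduce the statement to a uniform version of Theorem~\ref{ascmain} along the limit set $A$, followed by a one-line induction on the cardinality of $J_k$. If $I^c_A=\emptyset$ there is nothing to prove (take $M=0$), so assume $I^c_A\neq\emptyset$.

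First I would make the active set radius uniform over $A$. By the SIP the extended support $I$, and hence $I^c=I^c_A$, is the same for every $x^*\in A$ (recall that at a stationary point $\lambda_i(x^*)\geq 0$ for all $i$, so $I^c=\{i:\lambda_i(x^*)>0\}=I^c(x^*)$); consequently the index set $J_k=\{i\in I^c_A : (x_k)_i>0\}$ appearing in Theorem~\ref{ascmain} does not depend on which point of $A$ is chosen as $x^*$. Writing $r_*(x^*)=\delta_{\min}(x^*)/(\delta_{\min}(x^*)+2L)$ with $\delta_{\min}(x^*)=\min\{\lambda_i(x^*): i\in I^c_A\}$, I would use $\delta_{\min}(x^*)\geq\delta_{\min}(A)>0$ (positivity coming from compactness of $A$ together with the SIP, exactly as noted after Definition~\ref{support}) and the monotonicity of $y\mapsto y/(y+2L)$ on $\R_{>0}$ to obtain a single positive radius $r_A=\delta_{\min}(A)/(\delta_{\min}(A)+2L)\leq r_*(x^*)$ valid for all $x^*\in A$.

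Next I would invoke the convergence $x_k\to A$: since all norms on $\R^n$ are equivalent, $\dist_1(x_k,A)\to 0$, so there is $N$ with $\dist_1(x_k,A)<r_A$ for every $k\geq N$; by compactness of $A$ one can pick, for each such $k$, a point $x^*_k\in A$ with $\|x_k-x^*_k\|_1=\dist_1(x_k,A)<r_A\leq r_*(x^*_k)$. The stepsize hypothesis $\alpha_k\geq\bar\alpha_k$ combined with $\alpha_k\leq\alpha_k^{\max}$, which always holds in the AFW, gives precisely the stepsize condition required by Theorem~\ref{ascmain}: whenever $d_k=d_k^{\mathcal{A}}$ either $\alpha_k=\alpha_k^{\max}$ or $\alpha_k\geq -\nabla f(x_k)^\top d_k/(L\|d_k\|^2)$. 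Hence Theorem~\ref{ascmain} applied at $x^*=x^*_k$ yields $|J_{k+1}|\leq\max\{0,|J_k|-1\}$ for all $k\geq N$ (recall $J_k$ is the same regardless of the choice of $x^*_k$). This recursion forces $|J_k|$ to strictly decrease while positive and to remain $0$ once it reaches $0$; since $|J_N|\leq|I^c_A|$, we get $J_k=\emptyset$, i.e. $(x_k)_i=0$ for all $i\in I^c_A$, as soon as $k\geq M:=N+|I^c_A|$, which is the claim.

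The only genuinely delicate point is the uniformity step: the conclusion of Theorem~\ref{ascmain} would be useless if the active set radius $r_*(x^*)$ degenerated to $0$ as $x^*$ ranges over $A$, and excluding this is exactly what the compactness of $A$ and the SIP (through $\delta_{\min}(A)>0$) buy us. Everything else is a direct application of Theorem~\ref{ascmain} together with elementary bookkeeping on $|J_k|$.
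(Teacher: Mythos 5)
Your proposal is correct and follows essentially the same route as the paper: pick $\bar k$ so that $\dist_1(x_k,A)$ is below the uniform radius $r_*=\delta_{\min}(A)/(2L+\delta_{\min}(A))$, apply Theorem~\ref{ascmain} at a nearby point of $A$ (which is legitimate since the SIP makes $I^c_A$ and hence $J_k$ independent of the chosen base point), and conclude by the induction $|J_{k+1}|\leq\max\{0,|J_k|-1\}$. The uniformity and stepsize-translation details you spell out are implicit in the paper's argument but are exactly the right justifications.
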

\begin{proof}
	Let $J_k = \{i \in I^c_A \ | \ (x_k)_i> 0\}$ and choose $\bar{k}$ such that $\textnormal{dist}_1(x_k, A) < \frac{\delta_{\min}(A)}{2L + \delta_{\min}(A)} = r_*$ for every $k \geq \bar{k}$.
	Then for every $k \geq \bar{k}$ there exists $y^* \in A$ with $\|x_k - y^*\|_1 < r_*$. 
	But since by hypothesis for every $y^* \in A$ the support of the multiplier function is $I^c_A$ with $\delta_{\min}(A)\leq \lambda_i(y^*)$ for every $i \in I^c_A$, we can apply Theorem \ref{ascmain} with $y^*$ as fixed point and obtain that 
	$|J_{k+1}| \leq \max (0, |J_k| - 1)$. This means that it takes at most $|J_{\bar{k}}| \leq |I^c_A|$ steps for all the variables with indices in $I^c_A$ to be 0. Again by \eqref{actset},  we conclude by induction $|J_k| = 0$ for every $k\geq M=\bar{k}+|I^c_A|$, since $|J_{\bar{k} + |I^c_A|}|= 0$. 
\end{proof} 
The proof above also gives a relatively simple upper bound for the complexity of the active set problem:
\begin{Prop} \label{activecomp}
	Under the assumptions of Theorem \ref{activecompl}, the active set complexity is at most 
	$$\min\{\bar{k} \in \mathbb{N}_0 \ | \ \textnormal{dist}_1(x_k, A) < r_* \forall k \geq \bar{k} \} + |I^c_A|, $$
	where $r_* = \frac{\delta_{\min}(A)}{2L + \delta_{\min}(A)}$. 
\end{Prop}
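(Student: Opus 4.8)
The plan is to extract the bound directly from the proof of Theorem~\ref{activecompl}, making explicit its dependence on the number of iterations needed to enter the active set radius neighborhood. First I would fix $r_* = \frac{\delta_{\min}(A)}{2L + \delta_{\min}(A)}$ and set $\bar k = \min\{k \in \mathbb{N}_0 \ | \ \textnormal{dist}_1(x_\ell, A) < r_* \text{ for all } \ell \geq k\}$. This index is well defined: since $A$ enjoys the SIP we have $\delta_{\min}(A) > 0$ by compactness of $A$, hence $r_* > 0$, and since $x_k \to A$ the set of admissible $k$ is a nonempty subset of $\mathbb{N}_0$ and therefore has a least element. This $\bar k$ is precisely the first quantity appearing in the claimed bound.

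Next I would rerun the argument of Theorem~\ref{activecompl} starting from time $\bar k$. For each $k \geq \bar k$, compactness of $A$ lets me pick $y^* \in A$ attaining $\n{x_k - y^*}_1 = \textnormal{dist}_1(x_k, A) < r_*$. Since $A$ has the SIP, $I^c(y^*) = I^c_A$ and $\lambda_i(y^*) \geq \delta_{\min}(A)$ for every $i \in I^c_A$, so the per-point minimal multiplier $\min\{\lambda_i(y^*) \ | \ i \in I^c_A\}$ is at least $\delta_{\min}(A)$; because $t \mapsto \frac{t}{t + 2L}$ is increasing on $\R_{>0}$, the active set radius of $y^*$ in the sense of Theorem~\ref{ascmain} is at least $r_*$. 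Hence $\n{x_k - y^*}_1 < r_*$ lies inside that radius, and Theorem~\ref{ascmain} applies with $x^* = y^*$, giving $|J_{k+1}| \leq \max\{0, |J_k| - 1\}$ for $J_k = \{i \in I^c_A \ | \ (x_k)_i > 0\}$, and in particular showing that once $|J_k| = 0$ it remains $0$.

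Then I would finish by induction on $k \geq \bar k$: since $J_{\bar k} \subseteq I^c_A$ we have $|J_{\bar k}| \leq |I^c_A|$, and the recursion $|J_{k+1}| \leq \max\{0, |J_k| - 1\}$ forces $|J_k| = 0$ for every $k \geq \bar k + |I^c_A|$. Thus the AFW solves the active set problem in $M = \bar k + |I^c_A|$ steps, which is exactly the asserted bound.

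I do not expect a genuine obstacle here: all the real work is in Theorem~\ref{ascmain} and its deployment in Theorem~\ref{activecompl}, and the present statement is essentially a bookkeeping reformulation of that proof. The only points demanding a little care are (i) checking that the minimum defining $\bar k$ is attained, i.e.\ that it is a bona fide natural number, which follows from $r_* > 0$ and $x_k \to A$; and (ii) verifying that the radius $r_*$ built from the \emph{uniform} constant $\delta_{\min}(A)$ does not exceed the per-point radius of any $y^* \in A$, so that Theorem~\ref{ascmain} is legitimately invoked at each step — and this is immediate from the SIP together with the monotonicity of $t \mapsto t/(t+2L)$.
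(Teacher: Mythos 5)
Your proposal is correct and follows essentially the same route as the paper: the paper's proof of Proposition~\ref{activecomp} is simply a restatement of the argument in Theorem~\ref{activecompl}, namely picking, for each $k\geq\bar k$, a point $y^*\in A$ with $\n{x_k-y^*}_1<r_*$, invoking Theorem~\ref{ascmain} (using $\lambda_i(y^*)\geq\delta_{\min}(A)$ so that the uniform radius $r_*$ is within the per-point radius), and inducting on the recursion $|J_{k+1}|\leq\max\{0,|J_k|-1\}$. Your explicit checks that $\bar k$ is well defined and that the monotonicity of $t\mapsto t/(t+2L)$ justifies using the uniform constant are careful touches the paper leaves implicit, but the substance is identical.
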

We now report an explicit bound for the strongly convex case, and analyze in depth the nonconvex case in Section \ref{S:nonconv}. 
From strong convexity of $f$, it is easy to see that the following inequality holds for every $x$ on $\Delta_{n-1}$:
	\begin{equation}\label{he}
	 f(x) \geq f(x^*)+ \frac{u_1}{2} \|x - x^* \|_1^2,
	\end{equation}
	 with $u_1>0$. 
\begin{Cor} \label{ssimplex}
	Let $\{x_k\}$ be the sequence of points generated by AFW with $\alpha_k \geq \bar{\alpha}_k$. 
	Assume that $f$ is strongly convex and let
	\begin{equation}\label{ratescc}
	 h_{k} \leq q^k h_0,
	\end{equation}
		with $q < 1$ and $h_k = f(x_k) - f_*$, be the convergence rate  
	related to AFW. 
	Then the active set complexity is 
	$$\max\left (0, \left\lceil\frac{\textnormal{ln}(h_0) - \textnormal{ln}(u_1 r_*^2/2)}{\textnormal{ln(1/q)}}\right\rceil\right) +|I^c|\ . $$
\end{Cor}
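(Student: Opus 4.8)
The plan is to specialize Proposition~\ref{activecomp} to the strongly convex setting and then turn the implicit quantity $\bar k$ occurring there into an explicit number by feeding the linear rate~\eqref{ratescc} into the error bound~\eqref{he}. So the corollary is essentially a bookkeeping consequence of the machinery already developed.

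\textbf{Setup.} First I would observe that strong convexity of $f$ makes $\mathcal{X}^*$ the singleton $\{x^*\}$ consisting of the unique minimizer, and that $A := \{x^*\}$ is a compact subset of $\mathcal{X}^*$ trivially enjoying the SIP, with $I^c_A = I^c = I^c(x^*)$ and $\delta_{\min}(A) = \delta_{\min}$; hence the radius $r_* = \delta_{\min}(A)/(2L + \delta_{\min}(A))$ of Proposition~\ref{activecomp} is exactly the $r_*$ appearing in the statement. Since $q<1$, the rate~\eqref{ratescc} gives $h_k \to 0$, and then~\eqref{he} yields $\tfrac{u_1}{2}\|x_k - x^*\|_1^2 \le h_k \to 0$, i.e.\ $x_k \to x^* = A$. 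Together with the stepsize hypothesis $\alpha_k \ge \bar{\alpha}_k$, this is precisely what Theorem~\ref{activecompl} and Proposition~\ref{activecomp} require, so the active set complexity is at most $\bar k + |I^c|$ with
\begin{equation*}
\bar k = \min\bigl\{\, j \in \mathbb{N}_0 \ \big| \ \textnormal{dist}_1(x_k, x^*) < r_* \ \text{ for all } k \ge j \,\bigr\}\,.
\end{equation*}

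\textbf{Making $\bar k$ explicit.} Combining~\eqref{he} with~\eqref{ratescc} gives, for every $k$,
\begin{equation*}
\textnormal{dist}_1(x_k, x^*)^2 = \|x_k - x^*\|_1^2 \ \le\ \frac{2 h_k}{u_1} \ \le\ \frac{2 q^k h_0}{u_1}\,,
\end{equation*}
so $\textnormal{dist}_1(x_k, x^*) < r_*$ as soon as $q^k < u_1 r_*^2/(2 h_0)$; taking logarithms and dividing by $\ln(1/q) > 0$, this holds whenever $k > \bigl(\ln h_0 - \ln(u_1 r_*^2/2)\bigr)/\ln(1/q)$. Since $k\mapsto q^k$ is decreasing, it follows that
\begin{equation*}
\bar k \ \le\ \max\!\left(0,\ \left\lceil \frac{\ln h_0 - \ln(u_1 r_*^2/2)}{\ln(1/q)} \right\rceil\right),
\end{equation*}
the outer maximum covering the case in which $x_0$ already lies in the $r_*$-ball. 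Substituting into $\bar k + |I^c|$ yields the asserted bound.

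\textbf{Where to be careful.} There is no substantial obstacle here; the one step that deserves attention is the final elementary arithmetic, namely that the ceiling (rather than the ceiling plus one) suffices even in the knife-edge situation where $\bigl(\ln h_0 - \ln(u_1 r_*^2/2)\bigr)/\ln(1/q)$ is an integer and the bound on $\textnormal{dist}_1(x_k,x^*)$ is met with equality rather than strict inequality. This is harmless — it does not affect the order of the estimate, and it does not arise for generic data — so I would simply record the upper bound as stated; note also that if $x_k = x^*$ the active set is already identified, since $x^*_i = 0$ for $i \in I^c$ by complementary slackness. Everything else is a direct substitution into Proposition~\ref{activecomp}.
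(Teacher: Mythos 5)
Your proof is correct and follows essentially the same route as the paper: both use the linear rate \eqref{ratescc} together with the error bound \eqref{he} to show the iterates enter and remain in the $\ell_1$-ball of radius $r_*$ after $\bar k$ steps, and then invoke the local identification machinery of Theorem~\ref{ascmain} (you via Proposition~\ref{activecomp}, the paper directly via \eqref{actset}) to add $|I^c|$ further steps. The knife-edge ceiling issue you flag is present in the paper's argument as well and is equally harmless there.
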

\begin{proof}
	Notice that by the linear convergence rate \eqref{ratescc}, and the fact that $q<1$, the number of steps needed 
	to reach the condition 
	\begin{equation} \label{hkleq}
	h_k \leq \frac{u_1}{2}r_*^2
	\end{equation}
	is at most 
	$$ \bar{k} = \max \left(0, \left\lceil\frac{\textnormal{ln}(h_0) - \textnormal{ln}(u_1r_*^2/2)}{\textnormal{ln}(1/q)}\right\rceil\right)\ . $$	
	We claim that if condition \eqref{hkleq} holds then it takes at most $|I^c|$ steps for the sequence to be definitely in the active set. \\ 
	Indeed if $q^k h_0 \leq \frac{u_1}{2}r_*^2$ then necessarily $x_k \in B_1(x^*, r_*)$ by \eqref{he}, and by monotonicity of the bound~\eqref{ratescc} we then have $x_{k+h} \in B_1(x^*, r_*)$ for every $h \geq 0$. Once the sequence is definitely in $B_1(x^*, r_*)$ by $\eqref{actset}$ it takes at most $|J_{\bar{k}}| \leq |I^c|$ steps for all the variables with indices in $I^c$ to be 0. To conclude, again by \eqref{actset} since $|J_{\bar{k} + |I^c|}|= 0$ by induction $|J_m| = 0$ for every $m\geq \bar{k}+|I^c|$.
\end{proof}
\begin{Rem}
We would like to notice that strong convexity of $f$  in Corollary \ref{ssimplex} might actually be replaced by 
condition given in \eqref{he} if we assume the linear rate \eqref{ratescc} (which may not hold in the nonconvex case). 
\end{Rem}
The proof of AFW active set complexity for generic polytopes in the strongly convex case requires additional theoretical results and is presented in the appendix.
\par\medskip\noindent
\section{Active set complexity for nonconvex objectives} \label{S:nonconv}
In this section, we focus on problems with nonconvex objectives. We first give a more explicit convergence rate for AFW in the  nonconvex case, then we prove a general active set identification result for the method. Finally, we analyze both local and global active set complexity bounds related to AFW. A fundamental element in our analysis will be the FW gap function $g: \Delta_{n - 1} \rightarrow \R $ defined as 
 \begin{equation*}
 g(x) = \max_{i \in [1:n]} \{-\lambda_i(x)\}\ .
\end{equation*}
We  clearly have $g(x) \geq 0$ for every $x \in \Delta_{n - 1}$, with equality iff $x$ is a stationary point. The reason why this function is called FW gap is evident from the relation
\begin{equation*}
g(x_k) = -\nabla f(x_k)^\top d^{\mathcal{FW}}_k.
\end{equation*} 
This is a standard quantity appearing in the analysis of FW variants (see, e.g., \cite{jaggi2013revisiting} ) and is computed for free at each iteration of a FW-like algorithm. 
In \cite{lacoste2016convergence}, the author uses the gap to analyze the convergence rate of the classic FW algorithm in the 
nonconvex case. More specifically,  a convergence rate of $O(\frac{1}{\sqrt{k}})$ is proved for the minimal FW gap up to iteration $k$:
\begin{equation*}
g^*_k = \min_{0 \leq i \leq k-1} g(x_i).
\end{equation*}
The results extend in a nice and straightforward way the ones reported in \cite{nesterov2018lectures} for proving the convergence of gradient methods in the nonconvex case. 
Inspired by the analysis of the AFW method for strongly convex objectives reported in \cite{pena2018polytope}, 
we now study the AFW convergence rate in the nonconvex case  with respect to the sequence $\{g^*_k \}$. 

In the rest of this section we assume that the AFW starts from a vertex of the probability simplex. This is not a restrictive assumption. By exploiting affine invariance one can indeed apply the same theorems to the AFW starting from $e_{n+1}$ for $\tilde{f}: \Delta_n \rightarrow \R$ satisfying
\begin{equation*}
     \tilde{f}(y) = f(y_1e_1+...y_ne_n+y_{n+1}p),
\end{equation*}
where $p \in \Delta_{n-1}$ is the desired starting point. 
We will discuss more in detail the invariance of the AFW under affine transformations in Section \ref{generalafw}.  

\subsection{Global convergence}
We start investigating the minimal FW gap, giving estimates of rates of convergence:
\begin{Th} \label{nonconvb}
	Let $f^* = \min_{x \in \Delta_{n - 1}} f(x)$, and let $\{x_k\}$ be a sequence generated by the AFW algorithm applied to $f$ on $\Delta_{n-1}$, with $x_0$ a vertex of $\Delta_{n-1}$. Assume that the stepsize $\alpha_k$ is larger or equal  than $\bar{\alpha}_k$ (as defined in \eqref{alphabound}), and that
	\begin{equation} \label{eq:rho}
	f(x_k) - f(x_k + \alpha_kd_k) \geq \rho\bar{\alpha}_k \left(-\nabla f(x_k)^\top d_k\right)
	\end{equation}
	for some fixed $\rho > 0$. 
	Then for every $T \in \mathbb{N}$
	$$	g_T^* \leq \max\left(\sqrt{\frac{4L (f(x_0) - f^*)}{\rho T}}, \frac{4(f(x_0) - f^*)}{T} \right)\  .$$
\end{Th}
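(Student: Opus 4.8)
The plan is to combine the classical ``good step versus drop step'' accounting used in the analysis of AFW on polytopes with the gap‑based descent argument employed in the nonconvex Frank--Wolfe analysis of \cite{lacoste2016convergence}. Call iteration $k$ a \emph{drop step} if $d_k=d_k^{\mathcal{A}}$ and $\alpha_k=\alpha_k^{\max}$, and a \emph{good step} otherwise. The two facts to establish are: (i) at most half of the iterations $0,\dots,T-1$ are drop steps; and (ii) every good step decreases $f$ by at least $\rho\min\{g(x_k),\,g(x_k)^2/(2L)\}$. Combining these with a telescoping sum then yields the stated $O(1/\sqrt T)$ rate.

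Two preliminary remarks feed the argument. Since the AFW selects $d_k$ as whichever of $d_k^{\mathcal{FW}}$, $d_k^{\mathcal{A}}$ maximizes $-\nabla f(x_k)^\top d$, and $g(x_k)=-\nabla f(x_k)^\top d_k^{\mathcal{FW}}\geq 0$, we always have $-\nabla f(x_k)^\top d_k\geq g(x_k)\geq 0$; together with \eqref{eq:rho} and $\bar{\alpha}_k\geq 0$ this makes $\{f(x_k)\}$ non‑increasing. Moreover $d_k$ is the difference of a point of $\Delta_{n-1}$ and a vertex of $\Delta_{n-1}$, so $\|d_k\|\leq\sqrt{2}$, i.e. $\|d_k\|^2\leq 2$. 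For (i), write $S_k=\{i:(x_k)_i>0\}$: a FW step raises $|S_k|$ by at most one, a non‑maximal away step leaves it unchanged, and a drop step lowers it by exactly one; since $x_0$ is a vertex $|S_0|=1$ and $|S_T|\geq 1$, so the number $N_d$ of drop steps is at most the number $N_{\mathrm{FW}}$ of FW steps; since $N_d+N_{\mathrm{FW}}\leq T$ this gives $N_d\leq T/2$, hence the number $N_g$ of good steps satisfies $N_g\geq T/2$.

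For (ii), fix a good iteration $k$ and use $\bar{\alpha}_k=\min\bigl(\alpha_k^{\max},\ -\nabla f(x_k)^\top d_k/(L\|d_k\|^2)\bigr)$. If $\bar{\alpha}_k=-\nabla f(x_k)^\top d_k/(L\|d_k\|^2)$, then \eqref{eq:rho}, $\|d_k\|^2\leq 2$ and $-\nabla f(x_k)^\top d_k\geq g(x_k)$ give $f(x_k)-f(x_{k+1})\geq \rho(-\nabla f(x_k)^\top d_k)^2/(L\|d_k\|^2)\geq \rho\, g(x_k)^2/(2L)$. If instead $\bar{\alpha}_k=\alpha_k^{\max}$, then $\alpha_k\geq\bar{\alpha}_k=\alpha_k^{\max}$ together with $\alpha_k\leq\alpha_k^{\max}$ forces $\alpha_k=\alpha_k^{\max}$, so the step is not a non‑maximal away step, and being good it is not a drop step either; hence it is a FW step with $\alpha_k^{\max}=1$, and \eqref{eq:rho} gives $f(x_k)-f(x_{k+1})\geq\rho(-\nabla f(x_k)^\top d_k)\geq\rho\, g(x_k)$. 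Either way $f(x_k)-f(x_{k+1})\geq\rho\min\{g(x_k),\,g(x_k)^2/(2L)\}$. Summing over $k=0,\dots,T-1$, discarding drop‑step terms (legitimate since $f$ is non‑increasing), using $g(x_k)\geq g_T^*$ for such $k$ and that $y\mapsto\min\{y,y^2/(2L)\}$ is non‑decreasing, and using $N_g\geq T/2$, we obtain $f(x_0)-f^*\geq \tfrac{\rho T}{2}\min\{g_T^*,\,(g_T^*)^2/(2L)\}$. Rearranging, $\min\{g_T^*,\,(g_T^*)^2/(2L)\}\leq 2\bigl(f(x_0)-f^*\bigr)/(\rho T)$, and distinguishing $g_T^*\leq 2L$ (where the minimum is the quadratic term) from $g_T^*> 2L$ (where it equals $g_T^*$) recovers the two arguments of the maximum in the statement.

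The only genuinely delicate point is the bookkeeping behind (i): one must verify that an iteration which is neither a FW step nor a non‑maximal away step really removes one index from $S_k$, and---the subtle step used above---that when $\bar{\alpha}_k=\alpha_k^{\max}$ the hypothesis $\alpha_k\geq\bar{\alpha}_k$ pins $\alpha_k$ to $\alpha_k^{\max}$, so a good step can land in that case only as a full FW step. Everything else is the routine descent estimate together with the ``at least half the steps are good'' count.
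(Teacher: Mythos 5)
Your overall architecture coincides with the paper's: classify iterations into maximal away ("drop") steps and the rest, use the support count starting from $|S_0|=1$ to show that at least $T/2$ of the first $T$ iterations are non-drop steps, extract a per-iteration decrease of $f$ on those steps, and telescope. Your counting argument and your quadratic-decrease estimate in the sub-case $\bar{\alpha}_k = -\nabla f(x_k)^\top d_k/(L\|d_k\|^2)$ are correct and match the paper's Cases 1 and 3. The genuine problem sits in the other sub-case of your step (ii), the full Frank--Wolfe step with $\bar{\alpha}_k=\alpha_k^{\max}=1$. There you invoke \eqref{eq:rho} to obtain a decrease of $\rho\, g(x_k)$, so your telescoped inequality reads $f(x_0)-f^*\geq \tfrac{\rho T}{2}\min\{g_T^*,\,(g_T^*)^2/(2L)\}$; in the branch where the minimum is the linear term this yields only $g_T^*\leq 2(f(x_0)-f^*)/(\rho T)$, which is \emph{not} bounded by the second argument $4(f(x_0)-f^*)/T$ of the maximum in the statement unless $\rho\geq 1/2$. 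Since the theorem is asserted for every $\rho>0$, your closing claim that the two branches "recover the two arguments of the maximum" fails for $\rho<1/2$: what you have actually proved carries an extra factor $1/(2\rho)$ in the linear branch.

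The missing idea --- and the one place where the paper's proof genuinely differs from yours --- is that in the full FW step case one should not use \eqref{eq:rho} at all. Since $\bar{\alpha}_k=1$ forces $-\nabla f(x_k)^\top d_k\geq L\|d_k\|^2$, the standard descent lemma applied with unit stepsize gives
\[
f(x_k)-f(x_{k+1})\ \geq\ -\nabla f(x_k)^\top d_k-\tfrac{L}{2}\|d_k\|^2\ \geq\ \tfrac{1}{2}\left(-\nabla f(x_k)^\top d_k\right)\ \geq\ \tfrac{1}{2}\,g(x_k),
\]
a decrease of $\tfrac12 g(x_k)$ independent of $\rho$. With this replacement the telescoped bound becomes $f(x_0)-f^*\geq\tfrac{T}{2}\min\{\rho (g_T^*)^2/(2L),\ g_T^*/2\}$, and the two branches give exactly the stated maximum. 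Everything else in your write-up is sound, including the subtle observation that $\alpha_k\geq\bar\alpha_k=\alpha_k^{\max}$ pins $\alpha_k$ to $\alpha_k^{\max}$.
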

\begin{proof}
	Let $r_k = -\nabla f(x_k)$ and $g_k = g(x_k)$. We distinguish three cases. \\
	
	\textbf{Case 1.} $\bar{\alpha}_k < \alpha^{\max}_k$. 
	Then $\bar{\alpha}_k =  \frac{-\nabla f(x_k)^\top d_k}{{L\n{d_k}^2}} $ and relation \eqref{eq:rho} becomes 
	\begin{equation*}
	f(x_k) - f(x_k + \alpha_kd_k) \geq \rho\bar{\alpha}_k r_k^\top d_k = \frac{\rho}{L{\n{d_k}}^2} (r_k^\top d_k)^2
	\end{equation*}
	and consequently
	\begin{equation} \label{c1}
	f(x_k) - f(x_{k+1}) \geq \frac{\rho}{  L \n{d_k}^2} (r_k^\top d_k)^2 \geq \frac{\rho}{ L \n{d_k}^2}g_k^2 \geq \frac{\rho g_k^2}{2L},
	\end{equation}	
	where we used $r_k^\top d_k \geq g_k$ in the second inequality and $\n{d_k} \leq \sqrt{2}$ in the third one. \\ As for $S_k$, by hypothesis we have either $d_k = d_k^{\mathcal{FW}}$ so that $d_k = e_i - x_k$ or $d_k = d_k^{\mathcal{A}} = x_k - e_i$ for some $i \in \s{n}$. In particular $S_{k+1}\subseteq S_k \cup \{i\}$ so that $|S_{k+1}| \leq |S_k| + 1$. \\  
	\textbf{Case 2:} $\alpha_k = \bar{\alpha}_k =  \alpha^{\max}_k = 1, d_k = d_k^{\mathcal{FW}}$. 
	By the standard descent lemma~\cite[Proposition 6.1.2]{bertsekas2015convex} applied to $f$ with center $x_k$ and $\alpha = 1$
	$$ f(x_{k +1}) = f(x_k + d_k) \leq f(x_k) + \nabla f(x_k)^\top d_k + \frac{L}{2}\|d_k\|^2\ .  $$
	Since by the Case 2 condition $\min \left(\frac{-\nabla f(x_k)^\top d_k }{\|d_k\|^2L}, 1\right) =  \alpha_k = 1 $ we have
	\begin{equation*}
	\frac{-\nabla f(x_k)^\top d_k}{\|d_k\|^2L} \geq 1 \, \mbox{, so }\quad -L\n{d_k}^2 \geq \nabla f(x_k)^\top d_k\, ,
	\end{equation*} 
	hence we can write
	\begin{equation} \label{c2}
	f(x_k) - f(x_{k+1}) \geq -\nabla f(x_k)^\top d_k - \frac{L}{2}\|d_k\|^2 \geq - \frac{\nabla f(x_k)^\top d_k}{2} \geq \frac{1}{2}g_k\ . 
	\end{equation}
	Reasoning as in Case 1 we also have $|S_{k +1}| \leq |S_k| + 1$. \\
	\textbf{Case 3:} $\alpha_k = \bar{\alpha}_k = \alpha^{\max}_k, \ d_k = d_k^{\mathcal{A}}$. Then $d_k = x_k - e_i $ for $i \in S_k$ and  $$(x_{k + 1})_j=  (1+\alpha_k)(x_k)_j - \alpha_k (e_i)_j,$$ 
	with $\alpha_k = \alpha^{\max}_k = \frac{(x_k)_i}{1 - (x_k)_i}$. Therefore $(x_{k+1})_j = 0$ for $j \in \s{n} \sm S_k \cup \{i\} $ and $(x_{k+1})_j \neq 0$ for $j \in S_k \sm \{i\}$. In particular $|S_{k+1}| = |S_k| - 1$. 
	\vspace{2mm}
	
	For $i = 1,2,3$ let now $n_i(T)$ be the number of Case $i$ steps done in the first $T$ iterations of the AFW. We have by induction on the recurrence relation we proved for $|S_k|$
	\begin{equation}\label{n1n2}
	|S_{T}| - |S_0| \leq n_1(T) + n_2(T) - n_3(T)\ ,
	\end{equation}
	for every $T \in \mathbb{N}$. \\
	Since $n_3(T) = T-n_1(T) - n_2(T)$ from \eqref{n1n2} we get
	\begin{equation*}
	n_1(T) + n_2(T) \geq  \frac{T + |S_{T}| -|S_0|}{2} \geq \frac{T}{2}\ ,
	\end{equation*}
	where we used $|S_0| = 1 \leq |S_{T}|$.
	Let now $C_i^{T}$ be the set of iteration counters up to $T-1$ corresponding to Case $i$ steps for $i \in \{1,2,3\}$, which satisfies $|C_i^{T}| = n_i(T)$.
	We have by summing \eqref{c1} and \eqref{c2} for the indices in $C_1^{T}$ and $C_2^{T}$ respectively
	\begin{equation} \label{telescopic}
	\sum_{k \in C_1^{T}} f(x_k) - f(x_{k+1}) + \sum_{k \in C_2^{T}} f(x_{k+1}) - f(x_k) \geq 
	\sum_{k \in C_1^{T} }\frac{\rho g_k^2}{2L} + \sum_{k \in C_2^{T} }\frac{1}{2}g_k\ . 
	\end{equation} 
	We now lower bound the right-hand side of \eqref{telescopic} in terms of $g^*_{T}$ as follows:
	\begin{equation*}
	\begin{aligned}  
	& \sum_{k \in C_1^{T} }\frac{\rho g_k^2}{2L} + \sum_{k \in C_2^{T} }\frac{1}{2}g_k \geq |C_1^{T}| \min_{k \in C_1^{T}} \frac{\rho g_k^2}{2L} + |C_2^{T}| \min_{k \in C_2^{T}} \frac{g_k}{2} \geq \\ 
	\geq &	(|C_1^{T}| + |C_2^{T}|) \min \left( \frac{\rho (g^*_T)^2}{2L}, \frac{g^*_T}{2} \right) = 	\left [n_1(T) + n_2(T) \right] \min \left(\rho \frac{(g^*_T)^2}{2L}, \frac{g^*_T}{2} \right) \geq \\ 
	\geq & \frac{T}{2}	\min \left( \frac{\rho (g^*_T)^2}{2L}, \frac{g^*_T}{2} \right)\ .
	\end{aligned} 
	\end{equation*}
	Since the left-hand side of $\eqref{telescopic}$ can clearly be upper bounded by $f(x_0) - f^*$ we have
	\begin{equation*}
	f(x_0) - f^* \geq \frac{T}{2} \min \left(\frac{\rho(g^*_{T})^2}{2L}, \frac{g^*_{T}}{2} \right)\ .
	\end{equation*}
	To finish, if $\frac{T}{2} \min \left( \frac{g^*_{T}}{2}, \frac{\rho(g^*_{T})^2}{2L} \right) =\frac{Tg^*_{T}}{4} $ we then have 
	\begin{equation} \label{g*}
	g^*_{T} \leq \frac{4(f(x_0) - f^*)}{T}
	\end{equation}
	and otherwise
	\begin{equation} \label{g*sqrt}
	g^*_{T} \leq \sqrt{\frac{4L(f(x_0) - f^*)}{\rho T}}\ .
	\end{equation}
	The claim follows by taking the max in the system formed by \eqref{g*} and \eqref{g*sqrt}.
\end{proof}
When the stepsizes coincide with the lower bounds $\bar{\alpha}_k$ or are obtained using exact linesearch, we have the following corollary: 
\begin{Cor} \label{cor:nonconvb}
	Under the assumptions of Theorem \ref{nonconvb}, if $\alpha_k = \bar{\alpha}_k$ or if $\alpha_k$ is selected by exact linesearch then for every $T \in \mathbb{N}$
	\begin{equation} \label{eq:g*rate}
	g_T^* \leq \max\left(\sqrt{\frac{8L (f(x_0) - f^*)}{T}}, \frac{4(f(x_0) - f^*)}{T} \right)\  .	
	\end{equation}
\end{Cor}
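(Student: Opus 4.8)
The plan is to specialize Theorem~\ref{nonconvb} to the two stated stepsize rules and verify in each case that the sufficient decrease condition \eqref{eq:rho} holds with $\rho = 1/2$, so that plugging $\rho = 1/2$ into the bound from Theorem~\ref{nonconvb} yields \eqref{eq:g*rate} (note $4L/\rho = 8L$). So the whole task reduces to checking the hypothesis \eqref{eq:rho} for (i) $\alpha_k = \bar\alpha_k$ and (ii) $\alpha_k$ chosen by exact linesearch.

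For case (i), $\alpha_k = \bar\alpha_k$: I would split along the same three-way case distinction used inside the proof of Theorem~\ref{nonconvb}. When $\bar\alpha_k = \frac{-\nabla f(x_k)^\top d_k}{L\|d_k\|^2} \le \alpha_k^{\max}$, apply the standard descent lemma \cite[Proposition 6.1.2]{bertsekas2015convex} with center $x_k$ and step $\alpha_k = \bar\alpha_k$: $f(x_{k+1}) \le f(x_k) + \alpha_k \nabla f(x_k)^\top d_k + \frac{L}{2}\alpha_k^2\|d_k\|^2$, and substituting the value of $\bar\alpha_k$ gives $f(x_k) - f(x_{k+1}) \ge \frac{(-\nabla f(x_k)^\top d_k)^2}{2L\|d_k\|^2} = \frac12 \bar\alpha_k(-\nabla f(x_k)^\top d_k)$, which is exactly \eqref{eq:rho} with $\rho = 1/2$. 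When instead $\bar\alpha_k = \alpha_k^{\max} = 1$ with $d_k = d_k^{\mathcal{FW}}$, the computation \eqref{c2} in the proof of Theorem~\ref{nonconvb} already shows $f(x_k) - f(x_{k+1}) \ge -\frac12\nabla f(x_k)^\top d_k = \frac12\bar\alpha_k(-\nabla f(x_k)^\top d_k)$. The remaining subcase $\bar\alpha_k = \alpha_k^{\max}$ with $d_k = d_k^{\mathcal{A}}$ again follows from the descent lemma applied with step $\alpha_k = \alpha_k^{\max} \le 1$ (so $\alpha_k^2 \le \alpha_k$), which gives $f(x_k) - f(x_{k+1}) \ge \alpha_k(-\nabla f(x_k)^\top d_k) - \frac{L}{2}\alpha_k\|d_k\|^2$; since in this subcase $\bar\alpha_k = \alpha_k^{\max} \le \frac{-\nabla f(x_k)^\top d_k}{L\|d_k\|^2}$, i.e. $L\|d_k\|^2 \le \frac{-\nabla f(x_k)^\top d_k}{\alpha_k}$, one gets $f(x_k) - f(x_{k+1}) \ge \alpha_k(-\nabla f(x_k)^\top d_k) - \frac12(-\nabla f(x_k)^\top d_k) \ge \frac12\alpha_k(-\nabla f(x_k)^\top d_k)$, using $\alpha_k \le 1$ for the last step. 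So in all three cases \eqref{eq:rho} holds with $\rho = 1/2$.

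For case (ii), exact linesearch $\alpha_k \in \argmax\{f(x_k) - f(x_k + \alpha d_k) : \alpha \in [0,\alpha_k^{\max}]\}$: since the linesearch maximizes the decrease over the whole feasible interval and $\bar\alpha_k \in [0,\alpha_k^{\max}]$ is a feasible choice, we get $f(x_k) - f(x_k + \alpha_k d_k) \ge f(x_k) - f(x_k + \bar\alpha_k d_k)$, and the right-hand side is $\ge \frac12\bar\alpha_k(-\nabla f(x_k)^\top d_k)$ by exactly the case-(i) argument above (applied with stepsize $\bar\alpha_k$ in the descent lemma). Hence \eqref{eq:rho} again holds with $\rho = 1/2$. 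In both cases the hypotheses of Theorem~\ref{nonconvb} are met with $\rho = 1/2$, and the bound of that theorem becomes $g_T^* \le \max\bigl(\sqrt{8L(f(x_0)-f^*)/T},\ 4(f(x_0)-f^*)/T\bigr)$, which is \eqref{eq:g*rate}.

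I do not expect a genuine obstacle here; the only mild subtlety is making sure that in each subcase the descent-lemma estimate is applied with the correct stepsize and that the inequality $L\|d_k\|^2 \le (-\nabla f(x_k)^\top d_k)/\alpha_k$ is invoked in the right direction in the $d_k = d_k^{\mathcal{A}}$ maximal-step subcase — this is just unwinding the definition of $\bar\alpha_k$ as a minimum. Everything else is a direct appeal to Theorem~\ref{nonconvb} with $\rho$ pinned to $1/2$.
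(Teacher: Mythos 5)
Your overall strategy is the same as the paper's: reduce everything to verifying \eqref{eq:rho} with $\rho=\tfrac12$ and then invoke Theorem~\ref{nonconvb}. The paper does this by citing points 2 and 3 of Lemma~\ref{alphacond}, whose proof is a single unified computation: since $\bar{\alpha}_k\leq p_k/(L\|d_k\|^2)$ by definition (where $p_k=-\nabla f(x_k)^\top d_k$), the descent lemma gives $f(x_k)-f(x_k+\bar{\alpha}_k d_k)\geq \bar{\alpha}_k p_k-\tfrac{L}{2}\bar{\alpha}_k^2\|d_k\|^2 = \bar{\alpha}_k p_k - \tfrac{\bar{\alpha}_k}{2}(\bar{\alpha}_k L\|d_k\|^2)\geq \bar{\alpha}_k p_k/2$, with no case distinction needed. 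Your three-way split is unnecessary, and in the maximal away-step subcase your chain of inequalities breaks down in two places. First, you assume $\alpha_k^{\max}\leq 1$ to get $\alpha_k^2\leq\alpha_k$, but for an away step $\alpha_k^{\max}=(x_k)_i/(1-(x_k)_i)$ can exceed $1$. Second, the final step $\alpha_k p_k-\tfrac12 p_k\geq\tfrac12\alpha_k p_k$ is equivalent to $\alpha_k\geq 1$, not $\alpha_k\leq 1$ as you claim, so the two assumptions you invoke pull in opposite directions and the conclusion does not follow as written. The repair is exactly the factorization above: keep $\alpha_k^2$ and write $\tfrac{L}{2}\alpha_k^2\|d_k\|^2\leq\tfrac{\alpha_k}{2}p_k$ using $\alpha_k L\|d_k\|^2\leq p_k$.

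There is a second, more substantive omission in the exact-linesearch case. Theorem~\ref{nonconvb} assumes both \eqref{eq:rho} \emph{and} $\alpha_k\geq\bar{\alpha}_k$; the latter is not cosmetic, since it is what forces $\alpha_k=\alpha_k^{\max}$ whenever $\bar{\alpha}_k=\alpha_k^{\max}$ and hence makes every Case~3 step a genuine drop step, which is essential for the counting argument $n_1(T)+n_2(T)\geq T/2$. You verify only \eqref{eq:rho} by comparing the linesearch decrease with the decrease at $\bar{\alpha}_k$ (that part is fine), but you never show $\alpha_k\geq\bar{\alpha}_k$ for exact linesearch. The paper proves this in point 3 of Lemma~\ref{alphacond}: if $\alpha_k<\alpha_k^{\max}$, the optimality condition $d_k^\top\nabla f(x_k+\alpha_k d_k)=0$ together with Lipschitz continuity of $\nabla f$ gives $p_k=d_k^\top\bigl(\nabla f(x_k+\alpha_k d_k)-\nabla f(x_k)\bigr)\leq L\alpha_k\|d_k\|^2$, hence $\alpha_k\geq p_k/(L\|d_k\|^2)\geq\bar{\alpha}_k$. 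Without this step the corollary's hypotheses are not fully verified for exact linesearch.
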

\begin{proof}
	By points 2 and 3 of Lemma \ref{alphacond}, relation \eqref{eq:rho} is satisfied with $\rho= \frac{1}{2}$ for both  $\alpha_k= \bar{\alpha}_k$ and $\alpha_k$ given by exact linesearch, and we also have $\alpha_k \geq \bar{\alpha}_k$ in both cases. The conclusion follows directly from Theorem \ref{nonconvb}. 
\end{proof}

\subsection{A general active set identification result}

We can now give a general active set identification result in the nonconvex setting. 
While we won't use strict complementarity when the stepsizes are given by \eqref{alphabound}, without this assumption we will need strict complementarity. Notice that if $A\subseteq \mathcal{X}^*$ enjoys the SIP and if strict complementarity is satisfied for every $x \in A$, then as a direct consequence of \eqref{eq:supp} we have 
\begin{equation} \label{eq:strictcompl} 
	\mathrm{supp}(x) = [1:n] \sm I^c(x) = [1:n]\sm I^c_A
\end{equation}
for every $x \in A$. In this case we can then define $\mathrm{supp}(A)$ as the (common) support of the points in $A$.  
\begin{Th} \label{nonconvid}
	Let $\{x_k\}$ be the sequence generated by the AFW method with stepsizes satisfying $\alpha_k \geq \bar{\alpha}_k$ and \eqref{eq:rho}, where $\bar{\alpha}_k$ is given by  \eqref{alphabound}.
	Let $\mathcal{X}^*$ be the subset of stationary points of $f$. We have: 
	\begin{itemize}
		\item[(a)] $x_k \rightarrow \mathcal{X}^*$.
		\item[(b)] If $\alpha_k= \bar{\alpha}_k$ then $\{x_k\}$ converges to a connected component $A$ of $\mathcal{X}^*$. If additionally $A$ has the SIP then $\{x_k\}$ identifies $I^c_A$ in finite time.
	\end{itemize}
	Assume now that $\mathcal{X}^* = \bigcup_{i = 1}^{C}A_i$ with $\{A_i\}_{i=1}^C$ compact, with distinct supports and such that  $A_i$ has the SIP for each $i\in [1\! : \! C]$.
	\begin{itemize}
		\item[(c)] If ${\alpha}_k \geq \bar\alpha_k$ and if strict complementarity holds for all points in $\mathcal{X}^*$ then $\{x_k\}$ converges to $A_l$ for some $l \in [1:C]$ and identifies $I^c_{A_{l}}$ in finite time.
    \end{itemize}	
\end{Th}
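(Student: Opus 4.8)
The plan is to obtain (a) from a telescoping descent argument, (b) from (a) together with square–summability of the step lengths (which forces connectedness of the limit set), and (c) from (a) together with a support–stabilization argument powered by Theorem~\ref{ascmain} and strict complementarity.

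\textbf{Part (a).} Since $-\nabla f(x_k)^\top d_k \ge -\nabla f(x_k)^\top d_k^{\mathcal{FW}} = g(x_k) \ge 0$, relation \eqref{eq:rho} makes $\{f(x_k)\}$ nonincreasing and (by continuity on the compact $\Delta_{n-1}$) bounded below, so $\sum_k [f(x_k)-f(x_{k+1})] < \infty$. I would reuse the trichotomy from the proof of Theorem~\ref{nonconvb}: the ``good'' steps (Case~1, $\bar\alpha_k < \alpha_k^{\max}$, and Case~2, a full Frank--Wolfe step) satisfy $f(x_k)-f(x_{k+1}) \ge c_0 \min\{g(x_k)^2, g(x_k)\}$ for a fixed $c_0 > 0$ and enlarge $|S_k|$ by at most one, while a ``bad'' step (Case~3, a maximal away step) satisfies $|S_{k+1}| = |S_k|-1$; since $1 \le |S_k| \le n$, at most $n-1$ consecutive bad steps can occur. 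Suppose some limit point $\bar x$ is not stationary, so $g(\bar x) = 2c > 0$ and $g \ge c$ on a ball $B(\bar x, \delta)$; pass to $x_{k_j} \to \bar x$ with the windows $[k_j, k_j+n]$ pairwise disjoint. In each window, either all away steps have length $< \delta/(2n)$, so the window stays inside $B(\bar x,\delta)$ and (by the support count) contains a good step, with decrease $\ge c_0 \min\{c^2,c\}$; or the first step of length $\ge \delta/(2n)$ is reached while still inside $B(\bar x,\delta)$, and being a maximal away step it has $\bar\alpha_k = \alpha_k^{\max} \ge \delta/(2\sqrt2\,n)$, whence a decrease $\ge \rho\bar\alpha_k(-\nabla f(x_k)^\top d_k) \ge \rho\,\delta c/(2\sqrt2\,n)$. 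Either way each window contributes a fixed positive amount, contradicting $\sum_k[f(x_k)-f(x_{k+1})] < \infty$. Hence every limit point lies in $\mathcal X^*$, i.e. $x_k \to \mathcal X^*$.

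\textbf{Part (b).} When $\alpha_k = \bar\alpha_k$, the definition \eqref{alphabound} gives $\bar\alpha_k\|d_k\|^2 \le (-\nabla f(x_k)^\top d_k)/L$, and combining with \eqref{eq:rho} yields $\|x_{k+1}-x_k\|^2 = \bar\alpha_k^2\|d_k\|^2 \le \tfrac1L\,\bar\alpha_k(-\nabla f(x_k)^\top d_k) \le \tfrac1{\rho L}[f(x_k)-f(x_{k+1})]$; summing, $\|x_{k+1}-x_k\| \to 0$. A bounded sequence with vanishing consecutive differences has a nonempty compact connected set of limit points, which by (a) lies inside $\mathcal X^*$, hence inside a single connected component $A$ of $\mathcal X^*$; since $A$ is a component of the compact set $\mathcal X^*$ it is closed, hence compact, and $x_k \to A$. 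If moreover $A$ has the SIP, Theorem~\ref{activecompl} applies and gives finite--time identification of $I^c_A$.

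\textbf{Part (c).} By (a), $\dist(x_k,\mathcal X^*) \to 0$. Strict complementarity together with \eqref{eq:strictcompl} gives $\supp(x) = S_i := [1:n]\setminus I^c_{A_i}$ for all $x \in A_i$, and by hypothesis the $S_i$ are pairwise distinct; by compactness of the finitely many $A_i$ we may fix $\epsilon_0 > 0$ so that the sets $U_i := \{x : \dist(x,A_i) < \epsilon_0\}$ are pairwise disjoint, $x_m > 0$ on $U_i$ for $m \in S_i$, $\lambda_m(x) \ge \tfrac12\delta_{\min}(A_i) > 0$ on $U_i$ for $m \in I^c_{A_i}$, and $\dist_1(x,A_i) < r_*(A_i) := \delta_{\min}(A_i)/(2L+\delta_{\min}(A_i))$ on $U_i$. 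Pick $\bar k$ with $x_k \in \bigcup_i U_i$ for all $k \ge \bar k$, and let $i(k)$ be the index with $x_k \in U_{i(k)}$. First, \emph{no coordinate is added for $k \ge \bar k$}: a coordinate can only be added by a Frank--Wolfe step, and if it added $e_m$ with $(x_k)_m = 0$ then $m \in \argmin_i \nabla_i f(x_k) = \argmax_i(-\lambda_i(x_k))$, so $\lambda_m(x_k) = -g(x_k) \le 0$; the multiplier bound on $U_{i(k)}$ then forces $m \in S_{i(k)}$, whence $(x_k)_m > 0$, a contradiction. Since away steps never enlarge the support, $\supp(x_{k+1}) \subseteq \supp(x_k)$ for all $k \ge \bar k$, so $\supp(x_k)$ equals a fixed set $S$ for all $k \ge M$ (some $M \ge \bar k$). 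Second, \emph{$S = S_l$ for some $l$ and all limit points lie in $A_l$}: a limit point $y$ is in $\mathcal X^*$ with $\supp(y) \subseteq S$; if $\supp(y) = S_j \subsetneq S$, then along $x_{k_m} \to y$ (eventually in $U_j$) Theorem~\ref{ascmain} applied with a fixed point in $A_j$, together with \eqref{actset}, forces the set $\supp(x_{k_m})\cap I^c_{A_j} = S\setminus S_j \neq \emptyset$ to strictly shrink at step $k_m+1$, i.e. $\supp(x_{k_m+1}) \subsetneq S$, contradicting $k_m+1 \ge M$. Hence $\supp(y) = S$, and since the $S_i$ are distinct the only $A_i$ containing points of support $S$ is a unique $A_l$ with $S_l = S$; thus $x_k \to A_l$, and $(x_k)_i = 0$ for every $i \in I^c_{A_l} = [1:n]\setminus S$ and $k \ge M$, so $I^c_{A_l}$ is identified in finite time.

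\textbf{Main obstacle.} The delicate part is (c) under the mere assumption $\alpha_k \ge \bar\alpha_k$: the step lengths need not vanish, so the limit set may be disconnected and the iterates cannot be confined to a single component by the Ostrowski--type argument used in (b). The remedy is the eventual monotonicity of $\supp(x_k)$ once the iterates enter the small neighbourhoods $U_i$ — which is exactly where strict complementarity enters, through the positivity of $\lambda_m$ on the active indices combined with the Frank--Wolfe vertex characterisation via $\lambda$ (and Lemma~\ref{awstep}(b)) — together with Theorem~\ref{ascmain} to exclude a limit point sitting on a strictly smaller face. A secondary technical point, handled already in (a), is that maximal away steps carry no descent guarantee in terms of $g(x_k)$ and must be absorbed using the support--counting bound and a short/long--step dichotomy.
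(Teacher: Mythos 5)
Your proof is correct, but parts (a) and (c) follow genuinely different routes from the paper's. For (a), the paper argues directly rather than by contradiction: the telescoping bound from the proof of Theorem~\ref{nonconvb} forces $g(x_{k(j)})\to 0$ along Case 1/2 steps, while maximal away steps automatically satisfy $\alpha_k=\bar\alpha_k$, so Corollary~\ref{nonconv0} (i.e.\ \eqref{eq:lim}) gives vanishing displacement for them; combined with the bound of $n-1$ on consecutive away steps this yields $x_k\to\mathcal{X}^*$ outright. Your window/dichotomy argument trades ``bad steps have vanishing displacement'' for ``long bad steps produce a fixed descent''; it is slightly more delicate as written (the first step of length $\geq\delta/(2n)$ in a window need not be an away step, so the dichotomy should be phrased on the first step that is either good or a long \emph{maximal away} step — the prefix before it consists only of short maximal away steps), but it closes. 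Part (b) is essentially identical to the paper's. For (c) the difference is most pronounced: the paper picks the component $A_l$ of minimal support cardinality among those containing limit points, considers exit times $t(j)$ from its neighbourhood $U_l$, and rules them out via two cases — Lemma~\ref{awstep}(b) when $J_{t(j)}=\emptyset$, and an explicit $\ell_1$-contraction estimate $\|x_{t(j)+1}-\tilde x^*\|_1\le\|x_{t(j)}-\tilde x^*\|_1$ for the maximal away step when $J_{t(j)}\neq\emptyset$. You instead establish eventual monotonicity of the whole support (a FW step can only add the index minimizing $\lambda_i(x_k)$, which the multiplier bound on $U_{i(k)}$ plus strict complementarity forces into the already-positive coordinates), so $\supp(x_k)$ freezes at some $S$, and Theorem~\ref{ascmain} then excludes limit points whose support is strictly smaller than $S$. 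This avoids the $\ell_1$-norm computation entirely and is arguably cleaner; both arguments invoke strict complementarity, the disjoint neighbourhoods, and Theorem~\ref{ascmain} at the same essential junctures.
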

\begin{proof}
	a) By the proof of Theorem \ref{nonconvb} and the continuity of the multiplier function we have 
	\begin{equation} \label{eq10:*}
	x_{k(j)} \rightarrow g^{-1}(0) = \mathcal{X}^*\ ,
	\end{equation}
	where $\{k(j)\}$ is the sequence of indexes corresponding to Case 1 or Case 2 steps. Let $k'(j)$ be the sequence of indexes corresponding to Case 3 steps. Since for such steps $\alpha_{k'(j)} = \bar{\alpha}_{k'(j)}$  we can apply Corollary \ref{nonconv0} to obtain
	\begin{equation} \label{eq10:'}
	\n{x_{k'(j)} - x_{k'(j)+1}} \rightarrow 0\ .	
	\end{equation}
	Combining \eqref{eq10:*}, \eqref{eq10:'} and the fact that there can be at most $n-1$ consecutive Case 3 steps, we get $x_k \rightarrow \mathcal{X}^*$. \\
	b)  By the boundedness of $f$ and point 2 of Lemma \ref{alphacond} if $\alpha_k= \bar{\alpha}_k$ then $\n{x_{k+1} - x_k} \rightarrow 0$. It is a basic topology fact that if $\{x_k\}$ is bounded and $\n{x_{k+1} - x_k} \rightarrow 0$ then the set of limit points of $\{x_k\}$ is connected. This together with point a) ensures that the set of limit points must be contained in a connected component $A$ of $\mathcal{X}^*$. By Theorem \ref{activecompl} it follows that if $A$ has constant support $\{x_k\}$ identifies $I^c_A$ in finite time. \\
	c) Consider a disjoint family of subsets $\{U_i\}_{i=1}^C$ of $\Delta_{n-1}$ with $U_i = \{x \in \Delta_{n-1} \ | \ \dist_1(x, A_i) \leq r_i \}$ where $r_i$ is small enough to ensure some conditions that we now specify. First, we need 
	\begin{equation*}
	   r_i < \frac{\delta_{\min}(A_i)}{2L + \delta_{\min}(A_i)} 
	\end{equation*}
 so that $r_i$ is smaller than the active set radius of every $x \in A_i$ and in particular for every $x \in U_i$ there exists $x^* \in A_i$ such that 
 \begin{equation} \label{eq:xinU}
  \n{x-x^*}_1 < \frac{\delta_{\min}(x^*)}{2L + \delta_{\min}(x^*)}.   
 \end{equation}
Second, we choose $r_i$ small enough so that $\{U_i\}_{i=1}^C$ are disjoint and
	\begin{equation} \label{suppincl}
	\supp(y) \supseteq \supp(A_i) \ \forall y \in U_i\ ,
	\end{equation}
	where these conditions can be always satisfied thanks to the compactness of $A_i$. \\
	Assume now by contradiction that the set $S$ of limit points of $\{x_k\}$ intersects more than one of the $\{A_i\}_{i=1}^C$. Let in particular $A_{l}$ minimize $|\mathrm{supp}(A_l)|$ among the sets containing points of $S$. By point a) $x_k \in \cup_{i=1}^C U_i$ for $k\geq M$ large enough and we can define an infinite sequence  $\{t(j)\}$ of exit times greater than $M$ for $U_{l}$ so that $x_{t(j)} \in U_{l}$ and $x_{t(j) + 1} \in \cup_{i\in [1:C] \sm l} U_i $. Up to considering a subsequence we can assume $x_{t(j) + 1} \in U_{m}$ for a fixed $m\neq l$ for every $j \in \mathbb{N}_0$. \\
	We now distinguish two cases as in the proof of Theorem \ref{ascmain}, where notice that by equation \eqref{eq:xinU} the hypotheses of Theorem \ref{ascmain} are satisfied for $k=t(j)$ and some $x^* \in A_l$. \\
	\textbf{Case 1.} $(x_{t(j)})_h = 0$ for every $h \in I^c_{A_l}$. In the notation of Theorem \ref{ascmain} this corresponds to the case $|J_{t(j)}| = 0$. Then by \eqref{lambdaik} we also have $\lambda_h(x_{t(j)}) > 0$ for every $h \in I^c_{A_l}$. Thus $(x_{t(j)+1})_h = (x_{t(j)})_h = 0$ for every $h \in I^c_{A_l}$ by Lemma \ref{awstep}, so that we can write 
	\begin{equation} \label{incleq}
	\supp(A_m) \subseteq \supp(x_{t(j) + 1}) \subseteq [1:n] \sm I^c_{A_l} 	= \mathrm{supp}(A_l),
	\end{equation}
	where the first inclusion is justified by \eqref{suppincl} for $i=m$ and the second by strict complementarity (see also \eqref{eq:strictcompl} and the related discussion).  
    But since by hypothesis $\supp(A_m) \neq \supp({A_l})$ the inclusion \eqref{incleq} is strict and so it is in contradiction with the minimality of $|\mathrm{supp}(A_l)|$. \\
    \textbf{Case 2.} |$J_{t(j)}| > 0$. Then reasoning as in the proof of Theorem \ref{ascmain} we obtain $d_{t(j)} = x_{t(j)} - e_{\bar{h}}$ for some $\bar{h} \in J_{t(j)} \subset I^c_{A_l}$. Let $\tilde{x}^* \in A_{l}$, and let $\tilde{d} = \alpha_{t(j)} d_{t(j)}$. The sum of the components of $\tilde{d}$ is $0$ with the only negative component being $\tilde{d}_{\bar{h}}$ and therefore 
    \begin{equation} \label{eq:djh}
\tilde{d}_{\bar{h}} = - \sum_{h \in [1:n] \sm \bar{h}} \tilde{d}_h =  - \sum_{h \in [1:n] \sm \bar{h}} |\tilde{d}_h| 
    \end{equation}
    We claim that $\n{x_{t(j) + 1} - \tilde{x}^*}_1 \leq \n{x_{t(j)} - \tilde{x}^*}_1$. This is enough to finish because since $\tilde{x}^*\in A_{l}$ is arbitrary then it follows $\dist_1(x_{t(j)+1}, A_{l}) \leq \dist_1(x_{t(j)}, A_{ l})$ so that $x_{t(j) + 1} \in U_{l}$, a contradiction. \\
    We have
    \begin{equation*}
    \begin{aligned}
       & \n{\tilde{x}^* - x_{t(j) + 1}}_1 = \n{\tilde{x}^* - x_{t(j)} -\alpha_{t(j)}d_{t(j)} }_1 =  \\ 
       = &|\tilde{x}^*_{\bar{h}} - (x_{t(j)})_{\bar{h}} -\tilde{d}_{\bar{h}}| + \sum_{h \in [1:n] \sm \bar{h}} |\tilde{x}^*_h - (x_{t(j)})_h -\tilde{d}_h| = \\   
       = & |\tilde{x}^*_{\bar{h}} - (x_{t(j)})_{\bar{h}}| + \tilde{d}_{\bar{h}} + \sum_{h\in [1:n] \sm \bar{h}} |\tilde{x}^*_h - (x_{t(j)})_h -\tilde{d}_h| \leq \\ 
        \leq & |\tilde{x}^*_{\bar{h}} - (x_{t(j)})_{\bar{h}}| + \tilde{d}_{\bar{h}} + \sum_{h \in [1:n] \sm \tilde{h}} (|\tilde{x}^*_h - (x_{t(j)})_h | + |\tilde{d}_h|) = \\
         = & \n{x_{t(j)} - \tilde{x}^*}_1 + \tilde{d}_{\bar{h}} + \sum_{h \in [1:n] \sm \bar{h}} |\tilde{d}_h| = \n{x_{t(j)} - \tilde{x}^*}_1  
    \end{aligned}
    \end{equation*}
    where in the third equality we used $0=\tilde{x}^*_{\bar{h}} \leq - \tilde{d}_{\bar{h}} \leq (x_{t(j)})_{\bar{h}}$ and in the last equality we used \eqref{eq:djh}. \\
    Reasoning by contradiction we have proved that all the limit points of $\{x_k\}$ are in $A_{l}$ for some $l~\in~[1,...,C]$. The conclusion follows immediately from Theorem \ref{activecompl}. 
\end{proof}

\subsection{Quantitative version of active set identification} 
We now assume that the gap function $g(x)$ satisfies the H\"olderian error bound condition 
\begin{equation} \label{hbg}
g(x) \geq \theta \dist_1(x, \mathcal{X}^*)^p
\end{equation}
for some $\theta, p > 0$ (see e.g. \cite{bolte2017error} for some example). This is true for instance if the components of $\nabla f(x)$ are semialgebraic functions. We have the following active set complexity bound:
\newcommand{\varepsilonb}{\bar{\varepsilon}}
\begin{Th} \label{nonconvcompl}
	Assume $\mathcal{X}^* = \bigcup_{i \in [1:C]} A_i$ where $A_i$ is compact and with the SIP for every $i \in [1:C]$ and $0< d \myeq \min_{\{i,j\} \subset [1:C]} \dist_1(A_i, A_j)$. Let $r_*$ be the minimum active set radius of the sets $\{A_i\}_{i=1}^C$. Let $q(\varepsilon): \R_{>0} \rightarrow \mathbb{N}_0$ be such that 
	$f(x_k)-f(x_{k+1}) \leq \varepsilon$ for every $k \geq q(\varepsilon)$, and assume that $g(x)$ satisfies \eqref{hbg}. Assume that the stepsizes satisfy $\alpha_k = \bar{\alpha}_k$, with $\bar{\alpha}_k$ given by \eqref{alphabound}. Then the active set complexity is at most $q(\bar{\varepsilon}) + 2n$ for $\bar{\varepsilon}$ satisfying the following conditions
	\begin{equation} \label{e1:epscond}
	\begin{aligned}
	\varepsilonb & < L\, ,  \quad \left(\frac{2\sqrt{L\varepsilonb}}{\theta}\right)^{\frac{1}{p}} < r_*\, \quad\mbox{and }\quad 
	2\left(\frac{2\sqrt{L\varepsilonb}}{\theta}\right)^{\frac{1}{p}} + 2n \sqrt{\frac{2\varepsilonb}{L}} & \leq d\ .	
	\end{aligned} 		
	\end{equation}
\end{Th}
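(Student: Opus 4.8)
The plan is to run, in parallel, the per--step descent bookkeeping from the proof of Theorem~\ref{nonconvb} and the H\"olderian error bound~\eqref{hbg}, so as to confine the tail of $\{x_k\}$ to an $\ell_1$--neighbourhood of a single $A_l$, and then to quote Theorem~\ref{ascmain}. Write $\varepsilon_1:=\bigl(2\sqrt{L\bar\varepsilon}/\theta\bigr)^{1/p}$ and $\eta:=\sqrt{2\bar\varepsilon/L}$, so that~\eqref{e1:epscond} reads $\bar\varepsilon<L$, $\varepsilon_1<r_*$ and $2\varepsilon_1+2n\eta\le d$. First I would record two facts valid for all $k\ge q(\bar\varepsilon)$. (i)~Since $\alpha_k=\bar\alpha_k$, the descent lemma together with the definition of $\bar\alpha_k$ gives $f(x_k)-f(x_{k+1})\ge\frac{L}{2}\|x_{k+1}-x_k\|^2$, and because $\|e_i-x\|_1\le 2\|e_i-x\|$ on $\Delta_{n-1}$ this yields $\|x_{k+1}-x_k\|_1\le 2\eta$. (ii)~Revisiting Cases~1 and~2 of the proof of Theorem~\ref{nonconvb} with $\rho=\tfrac12$, any step falling in Case~1 or~2 satisfies $f(x_k)-f(x_{k+1})\ge\min\{g(x_k)^2/(4L),\,g(x_k)/2\}$, so (absorbing the Case~2 bound $2\bar\varepsilon$ into $2\sqrt{L\bar\varepsilon}$ via $\bar\varepsilon<L$) we get $g(x_k)\le 2\sqrt{L\bar\varepsilon}$, hence $\dist_1(x_k,\mathcal{X}^*)\le\varepsilon_1<r_*$ by~\eqref{hbg}. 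Since $2\varepsilon_1<d$, such an $x_k$ lies within $\varepsilon_1$ of exactly one $A_i$; write $i(k)$ for its index.

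Next I would pin down the limit component. A Case~3 (maximal away) step decreases $|S_k|$ by one while $|S_k|\ge 1$, so at most $n-1$ consecutive steps are of Case~3; hence there is a first Case-1-or-2 step $k_0$ with $q(\bar\varepsilon)\le k_0\le q(\bar\varepsilon)+n-1$ (if the method stops earlier it is at a stationary point, where the active set is identified by complementarity, and we are done). Put $A_l:=A_{i(k_0)}$ and pick $x^*\in A_l$ with $\|x_{k_0}-x^*\|_1=\dist_1(x_{k_0},A_l)\le\varepsilon_1<r_*$; since $A_l$ has the SIP we have $I^c(x^*)=I^c_{A_l}$ and $\delta_{\min}(x^*)\ge\delta_{\min}(A_l)$, so the active-set radius of $x^*$ is at least $r_*$ and Theorem~\ref{ascmain} applies at $x_{k_0}$ with this $x^*$. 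Now a Case-1-or-2 step is by definition not a maximal away step, whereas inside the proof of Theorem~\ref{ascmain} the ``Case~2'' alternative forces a maximal away step as soon as $J_{k_0}:=\{i\in I^c_{A_l}:(x_{k_0})_i>0\}\ne\emptyset$. Therefore $J_{k_0}=\emptyset$, i.e.\ the active set $I^c_{A_l}$ is already identified at $x_{k_0}$.

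It then remains to show identification is never lost, i.e.\ $(x_k)_i=0$ for all $i\in I^c_{A_l}$ and all $k\ge k_0$. I would prove this together with the auxiliary claim $i(k)=l$ for every Case-1-or-2 step $k\ge k_0$, by induction on $k$. A maximal away step satisfies $\supp(x_{k+1})\subseteq\supp(x_k)$ and so preserves the zero pattern. For a Case-1-or-2 step $k$, let $k'$ be the previous Case-1-or-2 step (or $k_0$): there are at most $n-1$ Case~3 steps between $k'$ and $k$, so $k-k'\le n$ and $\|x_k-x_{k'}\|_1\le 2n\eta$ by~(i), whence $\dist_1(x_k,A_l)\le\varepsilon_1+2n\eta\le d-\varepsilon_1$; if $i(k)\ne l$ then $d\le\dist_1(A_l,A_{i(k)})\le\dist_1(x_k,A_l)+\dist_1(x_k,A_{i(k)})\le d$, and reading~\eqref{e1:epscond} with the (harmless) strict inequalities makes this a genuine contradiction, so $i(k)=l$, $\dist_1(x_k,A_l)\le\varepsilon_1<r_*$, and the ``Case~1'' conclusion of Theorem~\ref{ascmain} keeps all $I^c_{A_l}$-coordinates at zero. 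Finally, by Theorem~\ref{nonconvid}(b) the sequence converges to a connected component of $\mathcal{X}^*$, which (being $d$-separated from the other components) lies inside $A_l$, so the active-set problem makes sense and is solved; by the above it is solved by step $k_0\le q(\bar\varepsilon)+n-1\le q(\bar\varepsilon)+2n$. (Even without the sharp observation $J_{k_0}=\emptyset$ one gets $q(\bar\varepsilon)+2n$: the sequence is within the active-set radius of $A_l$ from step $\le q(\bar\varepsilon)+n-1$ on, after which Theorem~\ref{ascmain} is applied at most $|I^c_{A_l}|\le n-1$ further times as in Theorem~\ref{activecompl}.)

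The main obstacle is the confinement argument of the last two paragraphs: converting the Euclidean rule $\alpha_k=\bar\alpha_k$ into an $\ell_1$ per-step bound of exactly the shape entering~\eqref{e1:epscond}, bounding by $n$ the spacing between consecutive gap-controlled iterates, and --- the most delicate point --- matching the Case~1/2/3 classification of Theorem~\ref{nonconvb} with the maximal-away dichotomy of Theorem~\ref{ascmain}, which is what forces identification already at $k_0$ and prevents a Case-1-or-2 iterate from hopping to a neighbouring $A_j$. The remaining steps are routine, modulo a little care with strict versus non-strict inequalities in~\eqref{e1:epscond}.
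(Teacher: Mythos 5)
Your proposal is correct and follows essentially the same route as the paper's proof: the same per-step bounds $g(x_k)\le 2\sqrt{L\bar{\varepsilon}}$ and $\|x_{k+1}-x_k\|\le\sqrt{2\bar{\varepsilon}/L}$, the same confinement of consecutive Case-1-or-2 iterates near a single $A_l$ via the at-most-$n-1$-consecutive-maximal-away-steps count together with the $d$-separation, and the same appeal to Theorem~\ref{ascmain}. Your additional observation that a Case-1-or-2 iterate inside the active set radius must already satisfy $J_{k}=\emptyset$ (since otherwise Theorem~\ref{ascmain} would force a maximal away step) is a valid sharpening the paper does not exploit, and the strict-versus-nonstrict mismatch you flag in \eqref{e1:epscond} is a blemish shared with the paper's own argument.
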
 
The proof is substantially a quantitative version of the argument used to prove point b) of Theorem \ref{nonconvid}. 
\begin{proof}
	Fix $k \geq q(\bar{\varepsilon})$, so that 
	\begin{equation} \label{e1:eps}
	f(x_{k}) - f(x_{k+1}) \leq \varepsilonb\ .
	\end{equation}
	We will refer to Case $i$ steps for $i \in [1:3]$ following the definitions in Theorem \ref{nonconvb}. If the step $k$ is a Case 1 step, then by \eqref{c1} with $\rho= 1/2$ we have 
	\begin{equation*}
	f(x_{k}) - f(x_{k+1}) \geq \frac{g(x_k)^2}{4L}
	\end{equation*}
	and this together with \eqref{e1:eps} implies	
	\begin{equation*}
	2\sqrt{L\bar{\varepsilon}} \geq 2\sqrt{L(f(x_k)-f(x_{k+1}))} \geq g(x_k)\ .
	\end{equation*}	
	Analogously, if the step $k$ is a Case 2 step, then by \eqref{c2} we have
	\begin{equation*}
	f(x_{k}) - f(x_{k+1}) \geq \frac{g(x_k)}{2}
	\end{equation*}
	so that $2\bar{\varepsilon} \geq g(x_k)$. By the leftmost condition in~\eqref{e1:epscond} we have $\varepsilonb < L$ so that $2\sqrt{L\bar{\varepsilon}} \geq 2\bar{\varepsilon}$, and therefore for both Case 1 and Case 2 steps we have 
	\begin{equation} \label{e1:gkbound}
	g(x_k) \leq 2\sqrt{L\bar{\varepsilon}}\ .
	\end{equation} 
	By inverting relation \eqref{eq:lim}, we also have 
	\begin{equation} \label{e1:nbound}
	\n{x_k - x_{k+1}} \leq \sqrt{\frac{2(f(x_k) - f(x_{k+1}))}{L}} \leq \sqrt{\frac{2\varepsilonb}{L}}\ .
	\end{equation}
	Now let $\bar{k} \geq q(\varepsilonb)$ be such that step $\bar{k}$ is a Case 1 or Case 2 step. By the error bound condition together with \eqref{e1:gkbound} 
	\begin{equation} \label{e1:d1bound}
	\dist_1(x_{\bar{k}}, \mathcal{X}^*) \leq \left(\frac{g(x_{\bar{k}})}{\theta}\right)^{\frac{1}{p}} \leq  \left( \frac{2\sqrt{L\bar{\varepsilon}}}{\theta}\right)^{\frac{1}{p}} < r_*\ ,
	\end{equation}
	where we used \eqref{e1:gkbound} in the second inequality and the second condition of \eqref{e1:epscond} in the third inequality.  
	In particular there exists $l$ such that $\dist_1(x_{\bar{k}}, A_{l}) \leq (2\sqrt{L\bar{\varepsilon}}/\theta)^{1/p}$. We claim now that $I^c_{A_{l}}$ is identified at latest at step $\bar{k} + n$. \\
	First, we claim that for every Case 1 or Case 2 step with index $\tau \geq \bar{k}$ we have $\dist_1(x_{\tau}, A_{l})\leq (g(x_{\tau})/\theta)^{1/p} $. We reason by induction on the sequence $\{s(k')\}$ of Case 1 or Case 2 steps following $\bar{k}$, so that in particular $s(1) = \bar{k}$ and $\dist_1(x_{s(1)}, A_{l})\leq g(x_{s(1)}) $ is true by \eqref{e1:d1bound}. Since there can be at most $n-1$ consecutive Case 3 steps, we have $s(k'+ 1) - s(k') \leq n$ for every $k' \in \mathbb{N}_0$. Therefore
	\begin{equation} \label{e1:0p}
	\begin{aligned}
	\n{x_{s(k')} - x_{s(k'+1)}}_1 \leq & \sum_{i=s(k')}^{s(k'+1)-1}\n{x_{i+1}-x_i}_1 \leq 2\sum_{i=s(k')}^{s(k'+1)-1}\n{x_{i+1}-x_i} \leq \\ \leq & 2[s(k'+1)-s(k')]\sqrt{\frac{2\varepsilonb}{L}} \leq 2n\sqrt{\frac{2\varepsilonb}{L}}\ ,
	\end{aligned}
	\end{equation}  
	where in the second inequality we used part 3 of Lemma \ref{simpelem} to bound each of the summands of the left-hand side, and in the third inequality we used  \eqref{e1:nbound}. Assume now by contradiction $\dist_1(x_{s(k'+1)}, A_{l}) > (g(x_{s(k' + 1)})/\theta)^{1/p} $. Then by \eqref{e1:d1bound} applied to $s(k'+1)$ instead of $\bar{k}$ there must exists necessarily $j\neq l$ such that $\dist_1(x_{s(k'+1)}, A_{j}) \leq (g(x_{s(k' + 1)})/\theta)^{1/p}$. In particular we have 
	\begin{equation} \label{e1:2p}
	\begin{aligned}
	\n{x_{s(k')} - x_{s(k'+1)}}_1 \geq & \dist_1(A_{l}, A_j) - \dist_1(x_{s(k'+1)}, A_{j}) - \dist_1(x_{s(k')}, A_{l}) \geq \\ \geq & d - \left( \frac{g(x_{s(k')})}{\theta} \right)^{\frac{1}{p}} - \left( \frac{g(x_{s(k'+1)})}{\theta} \right)^{\frac{1}{p}} \geq d-2 \left( \frac{2\sqrt{L\bar{\varepsilon}}}{\theta}\right)^{\frac{1}{p}}\ ,
	\end{aligned}
	\end{equation}  
	where we used~\eqref{e1:gkbound} in the last inequality. 
	But by the second condition of \eqref{e1:epscond}, we have  
	\begin{equation} \label{e1:1p}
	d- 2 \left( \frac{2\sqrt{L\bar{\varepsilon}}}{\theta}\right)^{\frac{1}{p}} > 2n\sqrt{\frac{2\varepsilonb}{L}}\ .
	\end{equation}
	Concatenating \eqref{e1:0p}, \eqref{e1:1p} and \eqref{e1:2p} we get a contradiction and the claim is proved. Notice that an immediate consequence of this claim is $\dist_1(x_{\tau}, A_{l}) < r_*$ by \eqref{e1:d1bound} applied to $\tau$ instead of $\bar{k}$, where $\tau \geq \bar{k}$ is an index corresponding to a Case 1 or Case 2 step. \\
	To finish the proof, first notice that there exists an index $\bar{k} \in [q(\varepsilonb), q(\varepsilonb) + n]$ corresponding to a Case 1 or Case 2 step, since there can be at most $n-1$ consecutive Case 3 steps. Furthermore, since by \eqref{e1:d1bound} we have $\dist_1(x_{\bar{k}}, A_{l}) < r_*$, by the local identification Theorem \ref{ascmain} in the steps immediately after $\bar{k}$ the AFW identifies one at a time the variables in $I^c_{A_{l}}$, so that there exists $h \leq n$ such that $(x_{\bar{k} + h})_i=0$ for every $i \in I^c_{A_{l}}$. Moreover, by the claim every Case 1 and Case 2 step following step $\bar{k}$ happens for points inside $B_1(A_{l}, r_*)$ so it does not change the components corresponding to $I^c_{A_{l}}$ by the local identification Theorem \ref{ascmain}. At the same time, Case 3 steps do not increase the support, so that $(x_{\bar{k} + h + l})_i=0$ for every $i \in I^c_{A_{l}}$, $l \geq 0$. Thus active set identification happens in $\bar{k} + h \leq q(\varepsilonb) + n + h \leq q(\varepsilonb) + 2n$ steps.  
\end{proof}
\begin{Rem}
	Assume that the set of stationary points is finite, so that $A_i = \{a_i\}$ for every $i \in [1\!:\!C]$ with $a_i \in \Delta_{n-1}$. Let \begin{equation}
	c_{\min} = \min_{i\in [1:C] } \min_{j:(a_i)_j \neq 0} (a_i)_j
	\end{equation}
	be the minimal nonzero component of a stationary point. Then one can prove a $q(\varepsilonb) + n$ active set identification bound replacing \eqref{e1:epscond} with the following condition on $\bar{\varepsilon}$ which has no explicit dependence on $n$:
	\begin{equation*}
	\begin{aligned}
	\varepsilonb <& L, \quad r(\varepsilonb) + l(\varepsilonb) < \min(r_*, d/2, c_{\min}/2 )\ ,
	\end{aligned}
	\end{equation*}	
	where $r(\varepsilonb) = \left(\frac{2\sqrt{L\varepsilonb}}{\theta}\right)^{\frac{1}{p}}$ and $l(\varepsilonb) = 2\sqrt{\frac{2\varepsilonb}{L}}$. We do not discuss the proof since it roughly follows  the same lines of Theorem \ref{nonconvcompl}'s proof. 
\end{Rem}

\begin{Rem}
When we have an explicit expression for the convergence rate $q(\varepsilon)$, then we can get an active set complexity bound using Theorem \ref{nonconvcompl}.
\end{Rem}

\subsection{Local active set complexity bound}
A key element to ensure local convergence to a strict local minimum will be the following property
\begin{equation}\label{strongdescent}
x_{k} \in \textnormal{argmax}\{f(x) \ | \ x \in \textnormal{conv}(x_k, x_{k+1}) \}\ .
\end{equation}
which in particular holds when $\alpha_k = \bar{\alpha}_k$ as it is proved in Lemma \ref{alphacond}. 
The property \eqref{strongdescent} is obviously stronger than the usual monotonicity, and it ensures that the sequence cannot escape from connected components of sublevel sets. When $f$ is convex it is immediate to check that \eqref{strongdescent} holds if and only if $\{f(x_k)\}$ is monotone non increasing. 
\\ 
Let $x^*$ be a  stationary point which is also a strict local minimizer  isolated from the other stationary points and $\tilde{f} = f(x^*)$.
Let then $\beta$ be such that there exists a connected component $V_{x^*, \beta}$ of $f^{-1}((-\infty, \beta])$ satisfying 
\begin{equation*}
  V_{x^*, \beta} \cap \mathcal{X}^* = \{x^*\} = \argmin_{x \in V_{x^*, \beta}} f(x). 
\end{equation*}  

\begin{Th} \label{nonconv}
	Let $\{x_k\}$ be a sequence generated by the AFW, with $x_0 \in V_{x^*, \beta}$ and with stepsize given by \eqref{alphabound}.
	Let 
	\begin{equation*}
	r_* = \frac{\delta_{\min}(x^*)}{2L + \delta_{\min}(x^*)}\ .
	\end{equation*}
	Then $x_k \rightarrow x^*$ and the sequence identifies the support in at most
	\begin{equation*}
	\left\lceil\max \left(\frac{4(f(x_0)-\tilde{f})}{\tau}, \frac{8L(f(x_0)-\tilde{f})}{\tau^2} \right)\right\rceil + 1 + |I^c(x^*)|
	\end{equation*}
	steps with 
	\begin{equation*}
	\tau = \min\{g(x) \ | \ x \in f^{-1}([m, + \infty)) \cap  V_{x^*, \beta} \}\ ,
	\end{equation*} 
	where 
	$$ m = \min \{\ f(x) \ | \  x \in V_{x^*, \beta} \sm B_{r_*}(x^*) \}\ . $$
\end{Th}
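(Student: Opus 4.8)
The plan is to argue in three stages: (i) trap the whole AFW sequence inside $V_{x^*,\beta}$; (ii) deduce $x_k \to x^*$; (iii) count how many iterations are needed to enter the ball $B_{r_*}(x^*)$ and, from there, to zero out the coordinates in $I^c(x^*)$ via the local identification Theorem~\ref{ascmain}. For (i), recall that with $\alpha_k = \bar\alpha_k$ property \eqref{strongdescent} holds by Lemma~\ref{alphacond}, i.e. $f(y) \le f(x_k)$ for every $y \in \textnormal{conv}(x_k, x_{k+1})$. Arguing by induction on $k$: if $x_k \in V_{x^*,\beta}$ then $f(x_k) \le \beta$, so the segment $\textnormal{conv}(x_k, x_{k+1})$ lies in $f^{-1}((-\infty,\beta])$; being connected and meeting $V_{x^*,\beta}$, it lies in that connected component, hence $x_{k+1} \in V_{x^*,\beta}$. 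Since $x_0 \in V_{x^*,\beta}$ the whole sequence stays in $V_{x^*,\beta}$, and $\{f(x_k)\}$ is nonincreasing.

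For (ii), $\alpha_k = \bar\alpha_k$ also forces \eqref{eq:rho} with $\rho = \tfrac12$ (Lemma~\ref{alphacond}), so Theorem~\ref{nonconvid}(a) yields $x_k \to \mathcal{X}^*$. As $V_{x^*,\beta}$ is compact, every limit point of $\{x_k\}$ lies in $\mathcal{X}^* \cap V_{x^*,\beta} = \{x^*\}$; a bounded sequence with a unique limit point converges, so $x_k \to x^*$, and $f(x_k) \downarrow \tilde f$ by continuity.

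For (iii), first observe $\tau > 0$ (if the minimizing set is empty, $\tau = +\infty$ and the bound is trivial): the compact set $\{x \in V_{x^*,\beta} : f(x) \ge m\}$ avoids $x^*$ because $m > \tilde f$ (indeed $x^*$ is the unique minimizer of $f$ on $V_{x^*,\beta}$ and $x^* \in B_{r_*}(x^*)$), so the continuous function $g$, which vanishes only at stationary points, is bounded below there by some $\tau > 0$. Let $T$ be the first index with $f(x_T) < m$, finite by (ii); since $x_T \in V_{x^*,\beta}$, the definition of $m$ forces $x_T \in B_{r_*}(x^*)$, and monotonicity keeps $x_k \in B_{r_*}(x^*)$ for every $k \ge T$. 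For $k < T$ we have $f(x_k) \ge m$, hence $g(x_k) \ge \tau$; plugging this into the Case~1 and Case~2 descent inequalities from the proof of Theorem~\ref{nonconvb} (with $\rho = \tfrac12$) shows each such step decreases $f$ by at least $\min\!\big(\tfrac{\tau^2}{4L}, \tfrac{\tau}{2}\big)$. Telescoping over $k = 0, \dots, T-1$ and using $f(x_0) - f(x_T) \le f(x_0) - \tilde f$, the number $n_1 + n_2$ of Case~1 and Case~2 steps with index $< T$ is at most $\max\!\big(\tfrac{4L(f(x_0)-\tilde f)}{\tau^2},\, \tfrac{2(f(x_0)-\tilde f)}{\tau}\big)$; and, exactly as in the proof of Theorem~\ref{nonconvb}, the number $n_3$ of Case~3 steps with index $< T$ satisfies $n_3 \le n_1 + n_2 + |S_0| - |S_T| \le n_1 + n_2$ (using $|S_0| = 1$ since $x_0$ is a vertex, and $|S_T| \ge 1$), so $T = n_1+n_2+n_3 \le 2(n_1+n_2) \le \max\!\big(\tfrac{8L(f(x_0)-\tilde f)}{\tau^2},\, \tfrac{4(f(x_0)-\tilde f)}{\tau}\big)$. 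Finally, since $\|x_T - x^*\|_1 < r_*$, Theorem~\ref{ascmain} applies at every step $k \ge T$: with $J_k = \{i \in I^c(x^*) : (x_k)_i > 0\}$ we get $|J_{k+1}| \le \max(0, |J_k|-1)$ and $|J_T| \le |I^c(x^*)|$, so $(x_k)_i = 0$ for all $i \in I^c(x^*)$ as soon as $k \ge T + |I^c(x^*)|$. Taking the ceiling of the real-valued bound on $T$ and adding $|I^c(x^*)|$ gives the stated estimate (the extra $+1$ absorbs rounding).

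The delicate point is stage (iii): one must ensure at the same time that the iterates are trapped in $B_{r_*}(x^*)$ --- so that Theorem~\ref{ascmain} keeps applying, the coordinates already set to $0$ stay at $0$, and Case~3 steps never re-enlarge the support --- and that the entrance time $T$ is controlled purely by the uniform gap lower bound $\tau$ over the ``far'' region $V_{x^*,\beta}\sm B_{r_*}(x^*)$. The mechanism converting the per-step decreases $\tfrac{\tau^2}{4L}$ and $\tfrac{\tau}{2}$ into a bound on the \emph{total} iteration count $T$ (and not merely on the number of Case~1--2 steps) is the inequality $n_3 \le n_1 + n_2$, which is where the vertex-start assumption and the recurrence for $|S_k|$ from Theorem~\ref{nonconvb} enter.
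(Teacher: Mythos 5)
Your proof is correct and follows essentially the same route as the paper: trap the iterates in $V_{x^*,\beta}$ via \eqref{strongdescent}, use the gap decrease to bound the time to enter $B_{r_*}(x^*)$, and then invoke Theorem~\ref{ascmain} for the final $|I^c(x^*)|$ steps. The only difference is cosmetic --- you re-derive the Case~1/2/3 counting of Theorem~\ref{nonconvb} inline on the prefix $k<T$ instead of citing Corollary~\ref{cor:nonconvb} directly, and you make explicit two points the paper leaves implicit (that $x_k\to x^*$ and that $\tau>0$), which arrive at the same bound.
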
  
\begin{proof}
	We have all the hypotheses to apply the bound given in Corollary \ref{cor:nonconvb} for $g^*_k$:
	\begin{equation*} 
	g^*_k \leq \max \left(\sqrt{\frac{8L(f(x_0) - f^*)}{k}}, \frac{4(f(x_0) - f^*)}{k}
	\right)\ .
	\end{equation*}
	It is straightforward to check that if
	\begin{equation*}
	\bar{h} =  \left\lceil \max\left(\frac{4(f(x_0) - f^*)}{\tau}, \frac{8L(f(x_0)-f^*)}{\tau^2} \right) \right\rceil+1
	\end{equation*}
	then
	\begin{equation*}
	g^*_{\bar{h}} < \tau\ .
	\end{equation*}
	Therefore, by the definition of $\tau$ ,we get $f(x_{\bar{h}}) < m$. 
	We claim that $x_h \in B_{r_*}(x^*)$ for every $h \geq \bar{h}$. Indeed by point 1 of Lemma \ref{alphacond} the condition $\alpha_k = \bar{\alpha}_k$ on the stepsizes imply that $\{x_k\}$ satisfies \eqref{strongdescent} and it can not leave connected components of level sets. Thus since $f(x_h) < m$ we have 
	$$ x_h \in V_{x^*, \beta} \cap f^{-1}(-\infty, m) \subset B_{r_*}(x^*)\ ,$$ 
	where the inclusion follows directly from the definition of $m$. We can then apply the local active set identification Theorem \ref{ascmain} to obtain an active set complexity of 
	\begin{equation*}
	\bar{h} + |I^c(x^*)| =  \left\lceil\max\left(\frac{4(f(x_0) - f^*)}{\tau}, \frac{8L(f(x_0)-f^*)}{\tau^2} \right) \right\rceil + 1 +  |I^c(x^*)|\ , 
	\end{equation*}
thus getting our result.	
\end{proof}

\section{Conclusions}\label{concl}
We proved general results for the AFW finite time active set convergence problem, giving explicit bounds on the number of steps necessary to identify the support of a solution. As applications of these results we computed the active set complexity for strongly convex functions and nonconvex functions. Possible expansions of these results would be to adapt them for other FW variants and, more generally, to other first order methods. It also remains to be seen if these identification properties of the AFW can be extended to problems with nonlinear constraints.    
\section{Appendix} \label{tech}
In several proofs we need some elementary inequalities concerning the euclidean norm $\n{\cdot}$ and the norm $\n{\cdot}_1$. 
\begin{Lemma} \label{simpelem}
	Given $\{x, y\}\subset \Delta_{n-1}$, $i \in [1:n]$:
	\begin{itemize}
		\item[1.] $\|e_i - x\| \leq \sqrt{2}(e_i - x)_i$;
		\item[2.] $(y-x)_i \leq \|y - x\|_1/2 $
		\item[3.] If $\{x_k\}$ is a sequence generated on the probability simplex by the AFW then $\n{x_{k+1}-x_k}_1 \leq 2\n{x_{k+1}-x_k}$ for every $k$. 
	\end{itemize}
\end{Lemma}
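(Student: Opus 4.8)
The plan is to treat the three items separately; each one reduces to the rigid sign structure of a vector of the form $c\,(e_\ell - x)$ with $x\in\Delta_{n-1}$, namely one distinguished coordinate together with the remaining coordinates all of the opposite sign, the whole vector summing to zero. For item~1, set $y = e_i - x$. Since $x\in\Delta_{n-1}$ we have $\sum_j y_j = 1 - 1 = 0$, hence $\sum_{j\neq i} x_j = 1 - x_i = y_i \ge 0$. Because the entries $x_j$ are nonnegative, $\sum_{j\neq i} x_j^2 \le \bigl(\sum_{j\neq i} x_j\bigr)^2 = y_i^2$, and therefore $\|y\|^2 = y_i^2 + \sum_{j\neq i} x_j^2 \le 2 y_i^2$. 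Taking square roots and using $y_i = (e_i - x)_i \ge 0$ gives $\|e_i - x\| \le \sqrt{2}\,(e_i - x)_i$.

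For item~2, put $z = y - x$, so that $\sum_j z_j = 0$ and consequently $z_i = -\sum_{j\neq i} z_j$. Then
\[
2 z_i = z_i - \sum_{j \neq i} z_j \le |z_i| + \sum_{j \neq i} |z_j| = \|z\|_1,
\]
which is exactly $(y-x)_i \le \|y-x\|_1/2$.

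For item~3, recall that the AFW update is $x_{k+1} - x_k = \alpha_k d_k$, where $d_k = e_\ell - x_k$ on a Frank--Wolfe step and $d_k = x_k - e_\ell$ on an away step, for a suitable atom $e_\ell$; in both cases $w := x_{k+1} - x_k = c\,(e_\ell - x_k)$ for a scalar $c$ (with $c = \pm\alpha_k$). Writing $w$ coordinatewise, $w_\ell = c\,(1 - (x_k)_\ell)$ and $w_j = -c\,(x_k)_j$ for $j\neq\ell$, so every coordinate other than the $\ell$-th has the sign $-\sgn(c)$. Since $\sum_j w_j = 0$, this forces $\sum_{j\neq\ell}|w_j| = |w_\ell|$, whence $\|w\|_1 = 2|w_\ell| \le 2\|w\|$, the last inequality holding because $|w_\ell|$ is a single coordinate of $w$ (the case $w = 0$ being trivial).

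None of these steps is a genuine obstacle; the only spots that need a moment's attention are the inequality $\sum_{j\neq i} x_j^2 \le (\sum_{j\neq i} x_j)^2$ in item~1, which relies on nonnegativity of the simplex coordinates, and, in item~3, the observation that every AFW search direction is of the form $\pm(e_\ell - x_k)$, which is precisely what makes the sign argument go through.
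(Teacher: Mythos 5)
Your proof is correct and follows essentially the same route as the paper's: the same comparison $\sum_{j\neq i}x_j^2 \le \bigl(\sum_{j\neq i}x_j\bigr)^2$ for item 1, the same zero-sum telescoping for item 2, and for item 3 the same observation that $x_{k+1}-x_k$ is a scalar multiple of $e_\ell - x_k$, so that $\|w\|_1 = 2|w_\ell| \le 2\|w\|$. No gaps.
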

\begin{proof}
	1. $(e_i - x)_j = -x_j$ for $j \neq i$, $(e_i - x)_i = 1-x_i = \sum_{j \neq i}x_j$. In particular
	\begin{equation*}
	\begin{aligned}
	\|e_i - x\| = (\sum_{j\neq i} x_j^2 + (e_i-x)_i^2)^{\frac{1}{2}} \leq ((\sum_{j\neq i} x_j)^2+ (1-x_i)^2)^{\frac{1}{2}} = \sqrt{2}(\sum_{j \neq i} x_j ) = \sqrt{2}(e_i - x)_i 
	\end{aligned}
	\end{equation*}
	2. Since $\sum_{j \in [1:n]} x_j = \sum_{j \in [1:n]}y_j$ so that $\sum (x-y)_j = 0$ we have 
	$$ (y-x)_i = \sum_{j \neq i} (x-y)_j $$
	and as a consequence
	\begin{equation*}
	\|y-x\|_1 = \sum_{j \in [1:n]} |(y-x)_j| \geq (y-x)_i + \sum_{j \neq i} (x-y)_j = 2(y-x)_i\ .
	\end{equation*}
	3. We have $x_{k+1} - x_k = \alpha_k d_k$ with $d_k = \pm (e_i - x_k)$ for some $i \in [1:n]$.  By homogeneity it suffices to prove $\n{d_k} \geq \frac{1}{2}\n{d_k}_1$. We have
	\begin{equation*}
		\n{d_k} \geq 1-(x_k)_i = \frac{1}{2} (1-(x_k)_i + \sum_{j \neq i} (x_k)_j) = \frac{1}{2}\n{d_k}_1\ ,
	\end{equation*}
	where in the first equality we used $\sum_{i=1}^n (x_k)_i=1$ and in the second equality we used $0\leq x_k \leq 1$.
\end{proof}

\subsection{Technical results related to stepsizes} \label{nonstationary}
We now prove several properties related to the stepsize given in \eqref{alphabound}.
\begin{Lemma}\label{alphacond}
	Consider a sequence $\{x_k\}$ in $\Delta_{n - 1}$ such that $x_{k+1} = x_k + \alpha_k d_k$ with $\alpha_k \in \R_{\geq 0}$, $d_k \in \R^n$. Let $\bar{\alpha}_k$ be defined as in \eqref{alphabound}, let $p_k = -\nabla f(x_k)^\top d_k$ and assume $p_k > 0$. Then: 
	\begin{itemize}
		\item[1.] If $0 \leq \alpha_k \leq 2p_k /(\|d_k\|^2L)$, the sequence $\{x_k\}$ has the property \eqref{strongdescent}.
		\item[2.] If $\alpha_k = \bar{\alpha}_k$ then \eqref{eq:rho} is satisfied with $\rho = \frac{1}{2}$. Additionally, we have
		\begin{equation} \label{eq:lim}
		f(x_k) - f(x_{k+1}) \geq L\frac{\n{x_{k+1} - x_k}^2}{2}\ .
		\end{equation}
		\item[3.] If $\alpha_k$ is given by exact linesearch, then $\alpha_k \geq \bar{\alpha}_k$ and \eqref{eq:rho} is again satisfied with $\rho= \frac{1}{2}$. 
	\end{itemize}
	
\end{Lemma}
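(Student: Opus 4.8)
The plan is to run all three parts off a single estimate, namely the standard descent lemma \cite[Proposition 6.1.2]{bertsekas2015convex} applied with center $x_k$. Since $\nabla f$ has Lipschitz constant $L$ and $\Delta_{n-1}$ is convex, the whole segment $x_k+t\alpha_kd_k$, $t\in[0,1]$, lies in $\Delta_{n-1}$, and for every such $t$
\[
f(x_k+t\alpha_kd_k)\ \le\ f(x_k)-t\alpha_k p_k+\tfrac{L}{2}(t\alpha_k)^2\|d_k\|^2\ =\ f(x_k)+t\alpha_k\Bigl(\tfrac{Lt\alpha_k}{2}\|d_k\|^2-p_k\Bigr);
\]
note $p_k>0$ forces $d_k\neq0$, so $\|d_k\|>0$ and $\bar\alpha_k$ is well defined. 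For Part~1 I would simply observe that when $\alpha_k\le 2p_k/(L\|d_k\|^2)$ the bracket above is $\le 0$ for every $t\in[0,1]$ (because $t\alpha_k\le\alpha_k\le 2p_k/(L\|d_k\|^2)$), while $t\alpha_k\ge0$; hence $f(x_k+t\alpha_kd_k)\le f(x_k)$ on $[0,1]$, which is exactly \eqref{strongdescent}.

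For Part~2, the point is that $\alpha_k=\bar\alpha_k\le p_k/(L\|d_k\|^2)$ by the definition of $\bar\alpha_k$ in \eqref{alphabound}, regardless of which of the two terms in the minimum is active; equivalently $L\bar\alpha_k\|d_k\|^2\le p_k$. Taking $t=1$ above gives $f(x_k)-f(x_{k+1})\ge\bar\alpha_k\bigl(p_k-\tfrac{L\bar\alpha_k}{2}\|d_k\|^2\bigr)$, and I would lower-bound the parenthesis in two ways using $L\bar\alpha_k\|d_k\|^2\le p_k$: by $p_k/2$, which yields $f(x_k)-f(x_{k+1})\ge\tfrac12\bar\alpha_k p_k$, i.e.\ \eqref{eq:rho} with $\rho=\tfrac12$; and by $\tfrac{L\bar\alpha_k}{2}\|d_k\|^2$, which yields $f(x_k)-f(x_{k+1})\ge\tfrac{L}{2}\bar\alpha_k^2\|d_k\|^2=\tfrac{L}{2}\|x_{k+1}-x_k\|^2$, i.e.\ \eqref{eq:lim}.

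Part~3 is where the real work lies. I would set $\psi(\alpha)=f(x_k+\alpha d_k)$, which is $C^1$ with $\psi'(\alpha)=\nabla f(x_k+\alpha d_k)^\top d_k$, and recall that exact linesearch picks $\alpha_k$ minimizing $\psi$ on $[0,\alpha_k^{\max}]$. To get $\alpha_k\ge\bar\alpha_k$ I would show $\psi$ is strictly decreasing on $[0,\bar\alpha_k]$: the map $\alpha\mapsto\psi'(\alpha)$ is Lipschitz with constant $L\|d_k\|^2$ and $\psi'(0)=-p_k$, so $\psi'(\alpha)\le-p_k+L\alpha\|d_k\|^2<0$ whenever $\alpha<p_k/(L\|d_k\|^2)$, in particular on $[0,\bar\alpha_k)$ since $\bar\alpha_k\le p_k/(L\|d_k\|^2)$; hence no point of $[0,\bar\alpha_k)$ can be a global minimizer of $\psi$ over $[0,\alpha_k^{\max}]$, so $\alpha_k\ge\bar\alpha_k$. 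Then \eqref{eq:rho} follows from minimality $\psi(\alpha_k)\le\psi(\bar\alpha_k)$ together with the Part~2 estimate applied to $\bar\alpha_k$ (which only invoked the descent lemma): $f(x_k)-f(x_{k+1})=\psi(0)-\psi(\alpha_k)\ge\psi(0)-\psi(\bar\alpha_k)\ge\tfrac12\bar\alpha_k p_k$. The main obstacle is precisely this monotonicity claim $\alpha_k\ge\bar\alpha_k$: since $\psi$ need not be convex it cannot be read off from a first-order condition and must instead be extracted from the Lipschitz control on $\psi'$ as above; everything else reduces to the descent lemma combined with the elementary inequality $L\bar\alpha_k\|d_k\|^2\le p_k$.
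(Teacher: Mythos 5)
Your proposal is correct and follows essentially the same route as the paper: Parts 1 and 2 are the paper's argument verbatim (the descent lemma plus the elementary inequality $L\bar\alpha_k\|d_k\|^2\le p_k$, exploited once with the bound $p_k/2$ and once with the bound $\tfrac{L}{2}\bar\alpha_k\|d_k\|^2$), and Part 3 rests on the same Lipschitz control of $\alpha\mapsto \nabla f(x_k+\alpha d_k)^\top d_k$ followed by the minimality comparison $\psi(\alpha_k)\le\psi(\bar\alpha_k)$. The only cosmetic difference is in the monotonicity claim of Part 3: the paper splits into the case $\alpha_k=\alpha_k^{\max}$ (trivial) and the interior case, where the first-order condition $\nabla f(x_k+\alpha_k d_k)^\top d_k=0$ combined with Lipschitzness gives $\alpha_k\ge p_k/(L\|d_k\|^2)\ge\bar\alpha_k$, whereas you integrate the bound $\psi'(\alpha)\le -p_k+L\alpha\|d_k\|^2<0$ over $[0,\bar\alpha_k)$ to rule out a minimizer there --- an equivalent argument that avoids the case split.
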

\begin{proof}
	By the standard descent lemma~\cite[Proposition 6.1.2]{bertsekas2015convex} we have
	\begin{equation} \label{e11:std}
	f(x_k) - f(x_k + \alpha d_k) \geq \alpha p_k - \alpha^2 \frac{L\|d_k\|^2}{2}\ .
	\end{equation}
	It is immediate to check 
	\begin{equation} \label{eq11:ds}
	\alpha \nabla f(x_k)^\top d_k +\alpha^2 \frac{L\|d_k\|^2}{2} \leq 0\ ,
	\end{equation}
	for every $0 \leq \alpha \leq \frac{2p_k}{L\|d_k\|^2} $ and 
	\begin{equation} \label{eq11:lin}
	\alpha p_k - \alpha^2 \frac{L\|d_k\|^2}{2} \geq \alpha p_k/2 \geq \alpha^2 \frac{L\|d_k\|^2}{2}
	\end{equation}
	for every $0 \leq \alpha \leq \frac{p_k}{L\|d_k\|^2} $. \\
	1. For every $x \in \textnormal{conv}(x_k, x_{k+1})\subseteq \left\{x_k+ \alpha d_k \ | \ 0 \leq \alpha \leq  \frac{2p_k}{L\|d_k\|^2} \right \}$, we have
	\begin{equation*}
	f(x) = f(x_k + \alpha d_k) \leq f(x_k) + \alpha \nabla f(x_k)^\top d_k +\alpha^2 \frac{L\|d_k\|^2}{2} \leq f(x_k)\ ,
	\end{equation*}
	where we used \eqref{e11:std} in the first inequality and \eqref{eq11:ds} in the second inequality. \\
	2. We have 
	\begin{equation*}
	f(x_k) - f(x_{k+1}) = f(x_k) - f(x_k + \bar{\alpha}_k d_k) \geq \bar{\alpha}_k p_k/2\ ,
	\end{equation*} 
	where we have the hypotheses to apply \eqref{eq11:lin} since $0 \leq \bar{\alpha}_k \leq \frac{p_k}{L\|d_k\|^2} $. 
	Again by \eqref{eq11:lin}
	\begin{equation*}
	f(x_k) - f(x_{k+1}) = f(x_k) - f(x_k + \bar{\alpha}_k d_k) \geq \bar{\alpha}_k^2 \frac{L\|d_k\|^2}{2} = L \frac{\n{x_k - x_{k+1}}^2}{2}\ .
	\end{equation*}
	3. If $\alpha_k = \alpha_k^{\max}$ then there is nothing to prove since $\bar{\alpha}_k \leq \alpha_k^{\max}$. Otherwise we have 
	\begin{equation} \label{eq11:linesearch}
	0 = \frac{\partial}{\partial \alpha} f(x_k + \alpha d_k) |_{\alpha= \alpha_k} =d_k^\top (\nabla f(x_k + \alpha_k d_k))
	\end{equation}
	and therefore  
	\begin{equation} \label{eq11:linesearch2}
	\begin{aligned}
	- d_k^\top \nabla f(x_k) & = - d_k^\top \nabla f(x_k) +	d_k^\top \nabla f(x_k + \alpha_k d_k) = - d_k^\top (\nabla f(x_k) - \nabla f(x_k + \alpha_k d_k)) \\ 
	& \leq L\n{d_k}\n{x_k - (x_k + \alpha_k d_k) } =  \alpha_k L\n{d_k}^2\ ,
	\end{aligned}	
	\end{equation}
	where we used \eqref{eq11:linesearch} in the first equality and the Lipschitz condition in the inequality. From \eqref{eq11:linesearch2} it follows 
	\begin{equation*}
	\alpha_k \geq \frac{- d_k^\top \nabla f(x_k)}{L \n{d_k}^2} \geq \bar{\alpha}_k
	\end{equation*}
	and this proves the first claim. As for the second,
	\begin{equation*}
	f(x_k) - f(x_k + \alpha_k d_k) \geq f(x_k) - f(x_k +  \bar{\alpha}_kd_k) \geq \frac{\bar\alpha_k}{2}p_k\ , 
	\end{equation*}
	where the first inequality follows from the definition of exact linesearch and the second by point 2 of the lemma. 
\end{proof}
\begin{Cor}\label{nonconv0}
	Under the hypotheses of Lemma \ref{alphacond}, assume that $f(x_k)$ is monotonically decreasing and assume that for some subsequence $k(j)$ we have $x_{k(j) + 1} = x_{k(j)} + \bar{\alpha}_{k(j)} d_{k(j)}$. Then $$\n{x_{k(j)} - x_{k(j)+1}} \rightarrow 0\ .$$ 	
\end{Cor}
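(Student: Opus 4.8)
The plan is to read off the conclusion directly from the sufficient-decrease estimate \eqref{eq:lim} in point~2 of Lemma~\ref{alphacond}, once we know that $\{f(x_k)\}$ converges. First I would observe that $f$ is continuous on the compact set $\Delta_{n-1}$ and hence bounded below; combined with the standing assumption that $\{f(x_k)\}$ is monotonically decreasing, this shows $\{f(x_k)\}$ converges to a finite limit. Consequently the nonnegative telescoping differences satisfy $f(x_k) - f(x_{k+1}) \to 0$ as $k \to \infty$, and in particular $f(x_{k(j)}) - f(x_{k(j)+1}) \to 0$ along the given subsequence.

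Next, for each $j$ the step at index $k(j)$ is taken with $\alpha_{k(j)} = \bar{\alpha}_{k(j)}$, so point~2 of Lemma~\ref{alphacond} applies at $k = k(j)$ and \eqref{eq:lim} yields
$$ f(x_{k(j)}) - f(x_{k(j)+1}) \;\geq\; \frac{L}{2}\,\n{x_{k(j)+1} - x_{k(j)}}^2 . $$
Rearranging gives $\n{x_{k(j)+1} - x_{k(j)}}^2 \leq \tfrac{2}{L}\bigl(f(x_{k(j)}) - f(x_{k(j)+1})\bigr)$; since the right-hand side tends to $0$ by the previous paragraph, so does the left-hand side, and taking square roots gives $\n{x_{k(j)} - x_{k(j)+1}} \to 0$.

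I do not expect any real obstacle here: the only points to verify are that the blanket hypotheses of Lemma~\ref{alphacond} (notably $p_k > 0$, which is what makes $\bar{\alpha}_k$ well defined and \eqref{eq:lim} applicable) hold along the subsequence, which is part of the assumptions, and that $f$ is bounded below, which is immediate from continuity on the compact simplex. Thus the statement is essentially a one-line corollary of \eqref{eq:lim}.
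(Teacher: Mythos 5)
Your proof is correct and is exactly the paper's argument: the paper also derives the conclusion by applying \eqref{eq:lim} at the indices $k(j)$ and invoking monotonicity and boundedness of $\{f(x_k)\}$, which you have simply written out in more detail.
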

\begin{proof}
	By~\eqref{eq:lim} we have  
	\begin{equation*}
	f(x_{k(j)}) - f(x_{k(j) + 1}) \geq \frac{L}{2}\n{x_{k(j)} - x_{k(j)+1}}^2 
	\end{equation*}
	and the conclusion follows by monotonicity and boundedness. 
\end{proof}
\subsection{AFW complexity for generic polytopes} \label{generalafw}
It is well known as anticipated in the introduction that every application of the AFW to a polytope can be seen as an application of the AFW to the probability simplex. \\
In this section we show the connection between the active set and the face of the polytope exposed by $-\nabla f(y^*)$, where $y^*$ is a stationary point for $f$. We then proceed to show with a couple of examples how the results proved for the probability simplex can be adapted to general polytopes. In particular we will generalize Theorem \ref{activecompl}, thus proving that under a convergence assumption the AFW identifies the face exposed by the gradients of some stationary points. An analogous result is already well known for the gradient projection algorithm, and was first proved in \cite{burke1994exposing} building on \cite{burke1988identification} which used an additional strict complementarity assumption but worked in a more general setting than polytopes, that of convex compact sets with a polyhedral optimal face.  \\
Before stating the generalized theorem we need to introduce additional notation and prove a few properties mostly concerning the generalization of the simplex multiplier function $\lambda$ to polytopes.\\  
Let $P$ be a polytope and $f: P \rightarrow \mathbb{R}^n$ be a function with gradient having Lipschitz constant $L$. \\
To define the AFW algorithm we need a finite set of atoms $\mathcal{A}$ such that $\textnormal{conv}(\mathcal{A}) = P$. As for the probability simplex we can then define for every $a \in \mathcal{A}$ the multiplier function $\lambda_a: P \rightarrow \mathbb{R}$ by
$$\lambda_a(y) = \nabla f(y)^{\top} (a-y)\ . $$
Let finally $A$ be a matrix having as columns the atoms in $\mathcal{A}$, so that $A$ is also a linear transformation mapping $\Delta_{|\mathcal{A}| - 1}$ in $P$ with $Ae_i = A^i \in \mathcal{A}$. \\
In order to apply Theorem \ref{ascmain} we need to check that the transformed problem 
\begin{equation*}
\min \{f(Ax) \ | \ x \in \Delta_{|\mathcal{A}| - 1}\}
\end{equation*}
still has all the necessary properties under the assumptions we made on $f$. \\
Let $\tilde{f}(x) = f(Ax) $.
First, it is easy to see that the gradient of $\tilde{f}$ is still Lipschitz.
Also $\lambda$ is invariant under affine transformation, meaning that $\lambda_{A^i}(Ax) = \lambda_i(x) $ for every $i \in [1:|\mathcal{A}|]$, $x \in \Delta_{|\mathcal{A}|-1}$. Indeed
$$\lambda_{A^i}(Ax) = \nabla f(Ax)^\top (A^i - Ax) = \nabla f(Ax)^\top A(e_i - x) = \nabla \tilde f(x)^\top  (e_i-x) = \lambda_i(x)\ .  $$
Let $Y^*$ be the set of stationary points for $f$ on $P$, so that by invariance of multipliers $\mathcal{X}^* = A^{-1}(Y^*)$ is the set of stationary points for $\tilde{f}$. The invariance of the identification property follows immediately from the invariance of $\lambda$: if the support of the multiplier functions for $f$ restricted to $B$ is $\{A^i\}_{i \in I^c}$, then the support of the multiplier functions for $\tilde{f}$ restricted to $A^{-1}(B)$ is $I^c$. \\
We now show the connection between the face exposed by $-\nabla f$ and the support of the multiplier function. Let $y^*=Ax^* \in Y^*$ and let
\begin{equation*}
P^*(y^*) = \{y \in P \ | \ \nabla f(y^*)^\top y = \nabla f(y^*)^\top y^* \} = \textnormal{argmax}\{-\nabla f(y^*)^\top y \ | \ y \in P \} = \mathcal{F}(-\nabla f(y^*))
\end{equation*}
be the face of the polytope $P$ exposed by $ - \nabla f(y^*)$. The complementarity conditions for the generalized multiplier function $\lambda$ can be stated very simply in 
terms of inclusion in $P^*(y^*)$: since $y^* \in P^*(y^*)$ we have $\lambda_a(y^*) = 0$ for every $a \in P^*(y^*)$, $\lambda_a(y^*) > 0$ for every $a \notin P^*(y^*)$. 
But $P$ is the convex hull of the set of atoms in $\mathcal{A}$ so that the previous relations mean that the face $P^*(y^*)$ is the convex hull of the set of atoms for which $\lambda_a(y^*) = 0$:
\begin{equation*}
P^*(y^*) = \textnormal{conv}\{a \in \mathcal{A} \ | \ \lambda_a(y^*) = 0 \}
\end{equation*}
or in other words since $\lambda_{A^i}(y^*) = 0$ if and only if $i \in I(x^*)=\{i\in [1:n]\ | \ \lambda_i(x^*)=0\}$:
\begin{equation} \label{expconv}
P^*(y^*) = \textnormal{conv} \{a \in \mathcal{A} \ | \  a=A^i, \ i \in I(x^*)\}\ .
\end{equation}
A consequence of \eqref{expconv} is that given any subset $B$ of $P$ with a constant  active set,  we necessarily get $P^*(w)~=~P^*(z)$ for every $w, z \in B$, since $I(w) = I(z)$. For such a subset $B$ we can then define 
\begin{equation*}
P^*(B) = P^*(y^*) \textnormal{ for any }y^* \in B
\end{equation*}
where the definition does not depend on the specific $y^* \in B$ considered.
We can now restate Theorem~\ref{activecompl} in slightly different terms:
\begin{Th} \label{activecompl2}
Let $\{y_k\}$ be a sequence generated by the AFW on $P$ and let $\{x_k\}$ be the corresponding sequence of weights in $\Delta_{|\mathcal{A}|-1}$ such that $\{y_k\} = \{Ax_k\}$. Assume that the stepsizes satisfy $\alpha_k \geq \bar{\alpha}_k$ (using $\tilde{f}$ instead of $f$ in \eqref{alphabound}). If there exists a compact subset $B$ of $Y^*$ with the SIP such that $y_k \rightarrow B$, then there exists $M$ such that 
$$y_k \in P^*(B) \textit{ for every }k\geq M. $$ 
\end{Th}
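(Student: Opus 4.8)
The plan is to reduce the statement entirely to Theorem~\ref{activecompl} by means of the affine invariance discussed above. Recall the facts already established: $\tilde f = f\circ A$ has Lipschitz gradient on $\Delta_{|\mathcal{A}|-1}$; the weight sequence $\{x_k\}$ with $y_k = Ax_k$ is itself an AFW sequence for $\tilde f$ on $\Delta_{|\mathcal{A}|-1}$ (and the stepsize hypothesis $\alpha_k\geq\bar\alpha_k$ in the statement is phrased exactly with $\tilde f$ in \eqref{alphabound}); $\mathcal{X}^* = A^{-1}(Y^*)$ is the stationary set of $\tilde f$; and $\lambda_{A^i}(Ax)=\lambda_i(x)$ for all $i$ and all $x\in\Delta_{|\mathcal{A}|-1}$.

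First I would introduce the pulled-back target set $B' = A^{-1}(B)\subseteq\Delta_{|\mathcal{A}|-1}$ and check that it satisfies the hypotheses of Theorem~\ref{activecompl}. Compactness of $B'$ follows from $B$ being compact, $A$ continuous, and $\Delta_{|\mathcal{A}|-1}$ compact. That $B'$ enjoys the SIP, with the same index set as $B$, is precisely the invariance of the identification property noted just before the theorem: if the support of the multiplier functions of $f$ on $B$ is $\{A^i \ |\ i\in I^c_B\}$, then the support of the multiplier functions of $\tilde f$ on $A^{-1}(B)$ is $I^c_B$; write $I^c_{B'}=I^c_B$ and $I_{B'}=[1:|\mathcal{A}|]\setminus I^c_{B'}$, which is constant on $B'$.

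Next I would show that $x_k\to B'$. This is the only genuinely non-bookkeeping step. Suppose not; then along a subsequence $\dist_1(x_{k_j},B')\geq\varepsilon$ for some $\varepsilon>0$, and by compactness of $\Delta_{|\mathcal{A}|-1}$ we may pass to a further subsequence with $x_{k_j}\to\bar x$ and $\dist_1(\bar x,B')\geq\varepsilon$, so $\bar x\notin B'$, i.e. $A\bar x\notin B$. But $y_{k_j}=Ax_{k_j}\to A\bar x$ by continuity of $A$, while $y_k\to B$ together with $B$ closed forces $A\bar x\in B$, a contradiction. I expect this compactness/closedness argument (using that $A$ need not be injective, so the pullback of convergence is not automatic) to be the \emph{main obstacle}, although it is short; everything else is routine.

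Finally I would apply Theorem~\ref{activecompl} to $\tilde f$, the AFW sequence $\{x_k\}$, and the compact SIP set $B'$: there exists $M$ with $(x_k)_i=0$ for every $k\geq M$ and every $i\in I^c_{B'}$. Hence for $k\geq M$ one has
$$
y_k \;=\; Ax_k \;=\; \sum_{i\in I_{B'}} (x_k)_i\,A^i ,
$$
which is a convex combination of $\{A^i \ |\ i\in I_{B'}\}$ since the weights $(x_k)_i$ are nonnegative and sum to $1$ (the remaining components vanishing). By \eqref{expconv}, for any $y^*\in B$ and $x^*\in A^{-1}(y^*)$ we have $P^*(y^*)=\textnormal{conv}\{A^i \ |\ i\in I(x^*)\}=\textnormal{conv}\{A^i \ |\ i\in I_{B'}\}=P^*(B)$, so $y_k\in P^*(B)$ for all $k\geq M$, which is the claim.
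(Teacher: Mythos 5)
Your proposal is correct and follows exactly the route the paper intends: the paper's own proof is a one-line reduction to Theorem~\ref{activecompl} via the affine invariance of the multipliers, and you have simply made explicit the bookkeeping it leaves implicit (compactness and SIP of $A^{-1}(B)$, the compactness argument for $x_k\to A^{-1}(B)$, and the translation back through \eqref{expconv}). No gaps.
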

\begin{proof}
	Follows from Theorem \ref{activecompl} and the affine invariance properties discussed above.
\end{proof}
A technical point concerning Theorem \ref{activecompl2} is that in order to compute $\bar{\alpha}_k $ the Lipschitz constant $L$ of $ \nabla \tilde{f}$ (defined on the simplex) is necessary. When optimizing on a general polytope, the calculation of an accurate estimate of $L$ for $\tilde{f}$ may be problematic. However, by Lemma \ref{alphacond} if the AFW uses exact linesearch, the stepsize  $\bar{\alpha}_k$ (and in particular the constant $L$) is not needed because the inequality $\alpha_k \geq \bar{\alpha}_k$ is automatically satisfied.\\
We now generalize the analysis of the strongly convex case.  
The technical problem here is that strong convexity, which is used in Corollary \ref{ssimplex}, is not maintained by affine transformations, so that instead we will have to use a weaker error bound condition. As a possible alternative, in \cite{lacoste2015global} linear convergence of the AFW is proved with dependence only on affine invariant parameters, so that any version of Theorem \ref{ascmain} and Corollary \ref{ssimplex} depending on those parameters instead of $u_1, L$ would not need this additional analysis. \\ 
Let  $P = \{ y \in \mathbb{R}^n \ | \ Cy \leq b\}$, $y^*$ be the unique minimizer of $f$ on $P$ and $u > 0$ be such that
\begin{equation*}
f(y) \geq f(y^*)+\frac{u}{2} \|y - y^*\|^2\ .
\end{equation*}
The function $\tilde{f}$ inherits the error bound condition necessary for Corollary \ref{ssimplex} from the strong convexity of $f$:
for every $x \in \Delta_{|\mathcal{A}|-1}$ by \cite{beck2017linearly}, Lemma 2.2  we have 
$$\textnormal{dist} (x, \mathcal{X}^*) \leq \theta \|Ax - y^*\|  $$ 
where $\theta$ is the Hoffman constant related to $[C^T, [I; e; -e]^T]^T$. As a consequence if $\tilde{f}^*$ is the minimum of $\tilde{f}$
\begin{equation*}
\tilde{f}(x) - \tilde{f}^* = f(Ax) - f(y^*) \geq \frac{u}{2} \|Ax -y^*\|^2 \geq \frac{u}{2\theta^2} \textnormal{dist}(x, \mathcal{X}^*)^2
\end{equation*}
and using that $n\|\cdot \|^2 \geq \|\cdot \|_1^2$ we can finally retrieve an error bound condition with respect to $\|\cdot \|_1$:
\begin{equation} \label{errorbound}
\tilde{f}(x) - \tilde{f}^* \geq \frac{u}{2n\theta^2}\textnormal{dist}_1(x, \mathcal{X}^*)^2.
\end{equation}
Having proved this error bound condition for $\tilde{f}$ we can now generalize \eqref{awdir}:
\begin{Cor}	
	The sequence $\{y_k\}$ generated by the AFW is in $P^*(y^*)$ for
	$$ k \geq \max\left (0, \frac{\textnormal{ln}(h_0) - \textnormal{ln}(u_P r_*^2/2)}{\textnormal{ln}(1/q)}\right ) +|I^c| $$
	where $q\in(0,1)$, is the constant related to the linear convergence rate $f(y_k) - f(y^*) \leq q^k(f(y_0) - f(y^*))$, $u_P = \frac{u}{2n\theta^2}$, $r_* = \frac{\delta_{\min}}{2L + \delta_{\min}}$ with $ \delta_{\min} = \min \{\lambda_a(y^*) \ | \ \lambda_a(y^*) > 0 \}$. 
\end{Cor}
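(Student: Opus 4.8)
The plan is to reduce to the probability simplex via the affine invariance of the AFW and then repeat, verbatim, the argument of Corollary~\ref{ssimplex} — in the relaxed form indicated by the Remark following it, where strong convexity is replaced by a quadratic growth / error bound condition. Set $\tilde f(x)=f(Ax)$. As established in the preceding discussion, the sequence of weights $\{x_k\}$ with $y_k=Ax_k$ is precisely the sequence produced by the AFW applied to $\tilde f$ on $\Delta_{|\mathcal A|-1}$ with stepsizes $\alpha_k\ge\bar\alpha_k$ (now computed from $\tilde f$); moreover $\nabla\tilde f$ is Lipschitz, the multiplier functions are affine invariant, and $\mathcal X^*=A^{-1}(Y^*)=A^{-1}(\{y^*\})$ is the set of stationary points of $\tilde f$. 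The linear rate transfers unchanged: $\tilde f(x_k)-\tilde f^*=f(y_k)-f(y^*)\le q^k h_0$ with $h_0=f(y_0)-f(y^*)$.

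First I would record that $\mathcal X^*$ is a compact set enjoying the SIP of Definition~\ref{support}: by invariance of the identification property its extended support is constantly $I^c=\{i:\lambda_i(x^*)>0\}$, and by invariance of $\lambda$ we have $\delta_{\min}(\mathcal X^*)=\min\{\lambda_a(y^*):\lambda_a(y^*)>0\}=\delta_{\min}$, so that $r_*=\delta_{\min}/(2L+\delta_{\min})$ is exactly the common active-set radius of $\mathcal X^*$. Since $A$ need not be injective, $\mathcal X^*$ is in general not a singleton, which is why the ``distance-to-a-set'' versions of the earlier results (Theorem~\ref{activecompl} / Theorem~\ref{activecompl2}) rather than Theorem~\ref{ascmain} for a fixed point are the ones to invoke here, and correspondingly the $\dist_1$ error bound \eqref{errorbound} rather than \eqref{he}.

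Next I would use \eqref{errorbound}, $\tilde f(x)-\tilde f^*\ge u_P\,\dist_1(x,\mathcal X^*)^2$ with $u_P=u/(2n\theta^2)$. If $q^k h_0\le u_P r_*^2/2$, equivalently $k\ge(\ln h_0-\ln(u_P r_*^2/2))/\ln(1/q)$, then $u_P\dist_1(x_k,\mathcal X^*)^2\le q^k h_0\le u_P r_*^2/2$, hence $\dist_1(x_k,\mathcal X^*)\le r_*/\sqrt2<r_*$; by monotonicity of $q^k h_0$ this persists for all larger $k$. Thus $\dist_1(x_k,\mathcal X^*)<r_*$ for every $k\ge\bar k$, where $\bar k=\max\{0,\lceil(\ln h_0-\ln(u_P r_*^2/2))/\ln(1/q)\rceil\}$.

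Finally, once the weights are definitely within $r_*$ of $\mathcal X^*$, Theorem~\ref{activecompl} (resting on the local identification Theorem~\ref{ascmain} and the recurrence \eqref{actset}) gives that each iteration removes at least one index from $J_k=\{i\in I^c:(x_k)_i>0\}$ until $J_k=\emptyset$, after which it stays empty; hence $(x_k)_i=0$ for all $i\in I^c$ and all $k\ge\bar k+|I^c|$. Translating back through \eqref{expconv}, the condition $(x_k)_i=0$ for every $i\in I^c$ is exactly $y_k=Ax_k\in\conv\{A^i:i\in I(x^*)\}=P^*(y^*)$; alternatively one simply cites Theorem~\ref{activecompl2} for this last step. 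I expect the only real work to be the bookkeeping in the second and third paragraphs — verifying that $\tilde f$ inherits all hypotheses (Lipschitz gradient, SIP of $\mathcal X^*$, the $\dist_1$ error bound with the stated constant) and that $r_*,\delta_{\min}$ computed on $P$ coincide with those for $\tilde f$ on the simplex — after which the argument is identical to that of Corollary~\ref{ssimplex}. Note that the stepsize hypothesis $\alpha_k\ge\bar\alpha_k$ enters only through Theorem~\ref{ascmain} and, as observed after Theorem~\ref{activecompl2}, is automatic for exact linesearch, so no estimate of the Lipschitz constant of $\nabla\tilde f$ is actually required.
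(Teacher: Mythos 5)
Your proposal is correct and follows essentially the same route as the paper: reduce to the simplex via affine invariance, use the error bound \eqref{errorbound} with constant $u_P$ to show $\dist_1(x_k,\mathcal{X}^*)<r_*$ after $\bar k$ iterations, then apply Theorem \ref{ascmain} (through the set version, picking for each $k\ge\bar k$ some $x^*\in\mathcal{X}^*$ within $r_*$ in $\ell_1$) to drive $|J_k|$ to zero in at most $|I^c|$ further steps, and translate back through \eqref{expconv}. The bookkeeping you flag (Lipschitz gradient of $\tilde f$, invariance of $\lambda$ and of $\delta_{\min}$, the SIP of $\mathcal{X}^*$) is exactly what the paper verifies in the discussion preceding the corollary.
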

\begin{proof}
	Let $I = \{i \in [1:|\mathcal{A}|] \ | \ \lambda_{A^i}(y^*) = 0 \}$, $P^* = P^*(y^*)$. 
	Since $P^* = \textnormal{conv}(\mathcal{A} \cap P^*)$ and by \eqref{expconv} $\textnormal{conv}(\mathcal{A} \cap P^*) = \textnormal{conv} \{A^i \ | \ i \in I\}$ 
	the theorem is equivalent to prove that for every $k$ larger than the bound, we have $y_k \in \textnormal{conv} \{A^i \ | \ i \in I\}$. 
	Let $\{x_k\}$ be the sequence generate by the AFW on the probability simplex, so that $y_k=Ax_k$. We need to prove that, for every $k$ larger than the bound, we have
	$$ x_k \in \textnormal{conv }\{e_i \ | \ i \in I\}\ , $$
	or in other words $(x_k)_i = 0$ for every $i \in I^c$. \\
	Reasoning as in Corollary \ref{ssimplex} we get that $\textnormal{ dist}_1(x_k,  \mathcal{X}^*) < r_*$ for every 
	\begin{equation} \label{lowbound}
	k \geq \frac{\textnormal{ln}(h_0) - \textnormal{ln}(u_P r_*^2/2)}{\textnormal{ln}(1/q)}\ .
	\end{equation}
	Let $\bar{k}$ be the minimum index such that \eqref{lowbound} holds. 
	For every $k \geq \bar{k}$ there exists $x^* \in \mathcal{X}^*$ with $\|x_k - x^*\|_1 < r_*$. 
	But $\lambda_i(x) = \lambda_{A^i}(y^*)$ for every $x \in \mathcal{X}^*$ by the invariance of $\lambda$, so that we can apply Theorem \ref{ascmain} with fixed point $x^*$ and obtain that if 
	$J_k = \{i \in I^c \ | \ (x_k)_i> 0\}$ then $J_{k+1} \leq \max (0, J_k - 1)$. The conclusion follows exactly as in Corollary \ref{ssimplex}. 	
\end{proof}
\bibliographystyle{plain}  
\bibliography{afwbib} 
\end{document}